\DeclareRobustCommand{\SkipTocEntry}[5]{}
\definecolor{blue}{rgb}{.255,.41,.884} 
\definecolor{red}{rgb}{1, 0, 0} 
\definecolor{green}{rgb}{.196,.804,.196} 
\definecolor{yellow}{rgb}{1,.648,0} 
\definecolor{pink}{rgb}{1,0.5,0.5}
\newtheorem{theorem}{Theorem}[section]
\newtheorem{lemma}[theorem]{Lemma}
\newtheorem{proposition}[theorem]{Proposition}
\newtheorem{corollary}[theorem]{Corollary}
\theoremstyle{definition}
\newtheorem{definition}[theorem]{Definition}
\theoremstyle{remark}
\newtheorem{remark}[theorem]{Remark}
\newtheorem{problem}[theorem]{Problem}
\newcommand{\be}{\begin{equation}}
\newcommand{\ee}{\end{equation}}
\newcommand{\boldnabla}{\mbox{\boldmath$ \nabla$}}
\newcommand{\al}{\mbox{\boldmath$\Delta$}}
\newlength{\myuldp}
\newlength{\myulthickness}
\newcommand{\setmyul}[2]{%
  \if\relax\detokenize{#1}\relax\else
    \setlength{\myuldp}{#1}%
  \fi
  \setlength{\myulthickness}{#2}%
}
\newcommand{\setmyuldepth}[1]{%
  \settodepth{\myuldp}{%
    \if\relax\detokenize{#1}\relax\myulalphabet\else#1\fi
  }%
  \addtolength{\myuldp}{0.0pt}
}
\newcommand{\myulalphabet}{abcdefghijklmnopqrstuvwxyz}
\newcommand*{\ovF}[1]{%
  $\m@th\overline{\raisebox{0pt}[\dimexpr\height-1.5pt\relax]{#1}}$%
}
\newcommand*{\mathringdn}[1]{%
  $\m@th\mathring{\raisebox{0pt}[\dimexpr\height-.6pt\relax]{#1}}$%
}
\newcommand{\II}{{\rm  I\hspace{-.2mm}I}}
\newcommand{\IIo}{\hspace{0.4mm}\mathring{\rm{ I\hspace{-.2mm} I}}{\hspace{.0mm}}}
\newcommand{\IIIo}{{\mathring{{\bf\rm I\hspace{-.2mm} I \hspace{-.2mm} I}}{\hspace{.2mm}}}{}}
\newcommand{\IVo}{{\mathring{{\bf\rm I\hspace{-.2mm} V}}{\hspace{.2mm}}}{}}
\newcommand{\Vo}{{\mathring{{\bf\rm V}}}{}}
\newcommand{\FF}[1]{\hspace{.2mm}\raisebox{0.07pt}{$
	{\mathring{\myul{$\overbracket[0.37pt][-1pt]{\hspace{-0.3mm} \scalebox{1}{
\resizebox{!}{\heightof{${\rm I}$}}{\raisebox{-0.1pt}{{{\resizebox{\widthof{$#1$}}{\height}{\phantom{d}}}\llap{$\mathbf{#1}$}}}}
}\hspace{-0.3mm}}$}}}$}\hspace{.2mm}}
\renewcommand{\FF}[1]{\hspace{.2mm}\raisebox{0.07pt}{$
	{\mathring{\myul{\ovF{\hspace{-0.6mm} \scalebox{1}{
\resizebox{!}{\heightof{${\rm I}$}}{\raisebox{-0.1pt}{{{\resizebox{\widthof{$#1$}}{\height}{\phantom{d}}}\llap{$\mathbf{#1}$}}}}
}\hspace{-0.3mm}}}}}$}\hspace{.2mm}}
\renewcommand{\FF}[1]{\mathring{\underline{\overline{\rm{#1}}}}}
\newcommand{\otop}{\mathring{\top}}
\newcommand{\R}{{\bf R }}
\newcommand{\ba}{\begin{array}}
\newcommand{\ea}{\end{array}}
\newcommand{\beq}{\begin{eqnarray}}
\newcommand{\eeq}{\end{eqnarray}}
\newtheorem{lm}{lemma}
\newtheorem{thee}{theorem}
\newtheorem{proo}{proposition}
\newtheorem{co}{corollary}
\newtheorem{rem}{remark}
\newtheorem{deff}{definition}
\newcommand{\bd}{\begin{deff}}
\newcommand{\ed}{\end{deff}}
\newcommand{\bl}{\begin{lm}}
\newcommand{\el}{\end{lm}}
\newcommand{\bp}{\begin{proo}}
\newcommand{\ep}{\end{proo}}
\newcommand{\bt}{\begin{thee}}
\newcommand{\et}{\end{thee}}
\newcommand{\bc}{\begin{co}}
\newcommand{\ec}{\end{co}}
\newcommand{\brm}{\begin{rem}}
\newcommand{\erm}{\end{rem}}
\def\frak{\mathfrak}
\def\Cal{\mathcal}
\newcommand{\bS}{\mathbb{S}}
\newcommand{\newc}{\newcommand}
\renewcommand{\Im}{\operatorname{Im}}
\let\ccdot.
\newc{\aR}{\mbox{\boldmath{$ R$}}}
\newc{\aS}{\mbox{\boldmath{$ S$}}}
\newc{\aT}{\mbox{\boldmath{$ T$}}}
\newc{\aW}{\mbox{\boldmath{$ W$}}}
\newc{\aD}{\mbox{\boldmath{$ D$}}\hspace{-.2mm}}
\newcommand{\aRic}{\mbox{\boldmath{$ \Ric$}}}
\newc{\aK}{\mbox{\boldmath{$ K$}}}
\newc{\aL}{\mbox{\boldmath{$ L$}}}
\newcommand{\ce}{{\Cal E}}
\newcommand{\ct}{{\Cal T}}
\newcommand{\bT}{{\Bbb T}}
\newcommand{\Rho}{{\it P}}
\newcommand{\Ric}{{\it Ric}}
\let\hash=\sharp  
\newcommand{\aM}{\, \widetilde{\!M}\, }
\newcommand{\nn}[1]{(\ref{#1})}
\newcommand{\X}{\mbox{\boldmath{$ X$}}}
\newcommand{\h}{\mbox{\boldmath{$ h$}}}
\newc{\obstrn}[2]{B^{#1}_{#2}}
\newcommand{\rpl}                         
{\mbox{$
\begin{picture}(12.7,8)(-.5,-1)
\put(0,0.2){$+$}
\put(4.2,2.8){\oval(8,8)[r]}
\end{picture}$}}
\newcommand{\lpl}                         
{\mbox{$
\begin{picture}(12.7,8)(-.5,-1)
\put(2,0.2){$+$}
\put(6.2,2.8){\oval(8,8)[l]}
\end{picture}$}}
\newc{\tensor}[1]{#1}
\newc{\Mvariable}[1]{\mbox{#1}}
\newc{\down}[1]{{}_{#1}}
\newc{\up}[1]{{}^{#1}}
\newc{\JulyStrut}{\rule{0mm}{6mm}}
\newc{\midtenPan}{\mbox{\sf S}}
\newc{\midten}{\mbox{\sf T}}
\newc{\midtenEi}{\mbox{\sf U}}
\newc{\ATen}{\mbox{\sf E}}
\newc{\BTen}{\mbox{\sf F}}
\newc{\CTen}{\mbox{\sf G}}
\def\sideremark#1{\ifvmode\leavevmode\fi\vadjust{\vbox to0pt{\vss
 \hbox to 0pt{\hskip\hsize\hskip1em
 \vbox{\hsize2cm\tiny\raggedright\pretolerance10000
  \noindent #1\hfill}\hss}\vbox to8pt{\vfil}\vss}}}
\numberwithin{equation}{section}
\newcommand{\hh}{{\hspace{.3mm}}}
\newcommand{\cc}{\boldsymbol{c}}
\newcommand{\sss}{\scriptscriptstyle}
\newcommand{\ltots}[1]{{\rm ltots}_{#1}}
\newcommand{\APE}[1]{{\rm APE}_{#1}}
\renewcommand\geq{\geqslant}
\renewcommand\leq{\leqslant}
\newcommand\reallywidehat[1]{%
\savestack{\tmpbox}{\stretchto{%
  \scaleto{%
    \scalerel*[\widthof{\ensuremath{#1}}]{\kern-.6pt\bigwedge\kern-.6pt}%
    {\rule[-\textheight/2]{1ex}{\textheight}}
  }{\textheight}%
}{0.5ex}}%
\stackon[1pt]{#1}{\tmpbox}%
}
\definecolor{ao}{rgb}{0.0,0.0,1.0}
\definecolor{forest}{rgb}{0.0,0.3,0.0}
\definecolor{red}{rgb}{0.8, 0.0, 0.0}
\begin{document}
\subjclass[2010]{53C18, 53A55, 53C21, 58J32}

%
%
%


\renewcommand{\today}{}
\title{
{
 A Sharp Characterization of the Willmore Invariant 
}}
%
%
%

\author{ Samuel Blitz${}^\flat$}

\address{${}^\flat$
  Center for Quantum Mathematics and Physics (QMAP)\\
  Department  of Physics\\ 
  University of California\\
  Davis, CA95616, USA} 
   \email{shblitz@ucdavis.edu}
\vspace{10pt}

\renewcommand{\arraystretch}{1}

\begin{abstract}
First introduced to describe surfaces embedded in $\mathbb{R}^3$, the Willmore invariant is a conformally-invariant extrinsic scalar curvature of a surface that vanishes when the surface minimizes bending and stretching. Both this invariant and its higher dimensional analogs appear frequently in the study of conformal geometric systems. To that end, we provide a characterization of the Willmore invariant in general dimensions. In particular, we provide a sharp sufficient condition for the vanishing of the Willmore invariant and show that in even dimensions it can be described fully using conformal fundamental forms and one additional tensor.

\medskip

\noindent
\begin{center}
{\sf \tiny Keywords: 
Extrinsic conformal geometry, hypersurface embeddings, Willmore invariant}
\end{center}

\end{abstract}

\maketitle

\pagestyle{myheadings} \markboth{Blitz}{Higher Forms}

\thispagestyle{empty}

\newpage

\tableofcontents

\newcommand{\balpha}{{\bm \alpha}}
\newcommand{\balphas}{{\scalebox{.76}{${\bm \alpha}$}}}
\newcommand{\bnu}{{\bm \nu}}
\newcommand{\blambda}{{\bm \lambda}}
\newcommand{\bnus}{{\scalebox{.76}{${\bm \nu}$}}}
\newcommand{\bnuss}{\hh\hh\!{\scalebox{.56}{${\bm \nu}$}}}

\newcommand{\bmu}{{\bm \mu}}
\newcommand{\bmus}{{\scalebox{.76}{${\bm \mu}$}}}
\newcommand{\bmuss}{\hh\hh\!{\scalebox{.56}{${\bm \mu}$}}}

\newcommand{\btau}{{\bm \tau}}
\newcommand{\btaus}{{\scalebox{.76}{${\bm \tau}$}}}
\newcommand{\btauss}{\hh\hh\!{\scalebox{.56}{${\bm \tau}$}}}

\newcommand{\bsigma}{{\bm \sigma}}
\newcommand{\bsigmas}{{{\scalebox{.8}{${\bm \sigma}$}}}}
\newcommand{\bbeta}{{\bm \beta}}
\newcommand{\bbetas}{{\scalebox{.65}{${\bm \beta}$}}}

\renewcommand{\bS}{{\bm {\mathcal S}}}
\newcommand{\bB}{{\bm {\mathcal B}}}
\renewcommand{\bT}{{\bm {\mathcal T}}}
\newcommand{\bM}{{\bm {\mathcal M}}}

\newcommand{\go}{{\mathring{g}}}
\newcommand{\nuo}{{\mathring{\nu}}}
\newcommand{\alphao}{{\mathring{\alpha}}}

\newcommand{\Ell}{\mathscr{L}}
\newcommand{\density}[1]{[g\, ;\, #1]}

\renewcommand{\Dot}{{\scalebox{2}{$\cdot$}}}

\newcommand{\PanE}{P_{4}^{\sss\Sigma\hookrightarrow M}}
\newcommand\eqSig{ \mathrel{\overset{\makebox[0pt]{\mbox{\normalfont\tiny\sffamily $\Sigma$}}}{=}} }
\newcommand\neqSig{ \mathrel{\overset{\makebox[0pt]{\mbox{\normalfont\tiny\sffamily $\Sigma$}}}{\neq}} }
\newcommand\eqtau{\mathrel{\overset{\makebox[0pt]{\mbox{\normalfont\tiny\sffamily $\tau$}}}{=}}}
\newcommand{\hd }{\hat{D}}
\newcommand{\hdb}{\hat{\bar{D}}}
\newcommand{\Two}{{{{\bf\rm I\hspace{-.2mm} I}}{\hspace{.2mm}}}{}}
\newcommand{\TwoN}{{\mathring{{\bf\rm I\hspace{-.2mm} I}}{\hspace{.2mm}}}{}}
\newcommand{\Fn}{\mathring{\mathcal{F}}}
\newcommand{\csdot}{\hspace{-0.55mm} \cdot \hspace{-0.55mm}}
\newcommand{\IdD}{(I \csdot \hd)}
\newcommand{\Kd}{\dot{K}}
\newcommand{\Kdd}{\ddot{K}}
\newcommand{\Kddd}{\dddot{K}}

\newcommand{\pdot}{{\boldsymbol{\cdot}}}

 \newcommand{\bdot }{\mathop{\lower0.33ex\hbox{\LARGE$\cdot$}}}


\section{Introduction}
A particularly well-studied equation describing a surface $S$ embedded in flat $\mathbb{R}^3$ is known as the Willmore~\cite{Willmore} equation:
$$ \mathcal{B}_3 := \bar{\Delta} H + 2H (H^2 - k) = 0\,,$$
where $k$ is the Gauss curvature of $S$, $H$ is its mean curvature, $\bar{\Delta}$ is the Laplacian on $S$, and the curvature quantity $\mathcal{B}_3$ is known as the \textit{Willmore invariant}. In particular, one can view the equation $\mathcal{B} = 0$ as the energy minimizing equation (found by functional differentiation) for a surface $S$ with energy given by the Willmore energy,
$$ E = \int_S (H^2 - k) \,, $$
first described in Willmore's celebrated conjecture. This conjecture was only recently proven by Marques and Neves~\cite{WillConj}.

The Willmore energy and invariant can be extended to generally curved ambient manifolds, whereby we observe two critical facts: the Willmore energy and invariant are global and local conformal invariants, respectively, of the embedded surface, and the Willmore invariant is linear in its leading term. Their conformal invariance and linearity are likely major driving factors in the frequent appearance of this pair in both the mathematical and physics literature. Early examples of this include Polyakov~\cite{Polyakov}, who found the Willmore energy when studying strings (subsequently calling it the ``rigid string action'') and Graham and Witten~\cite{GrWi}, who found the Willmore energy when studying anomalies in the AdS/CFT correspondence. Recently, there has been increased interest in higher-dimensional analogs of the Willmore energy and invariant~\cite{Guven,Vyatkin,GR,AGW,Will1}, in part to better understand the invariants of hypersurfaces embedded in conformal manifolds.

A generalization of the Willmore invariant $\mathcal{B}_d$ for hypersurfaces embedded in generally curved $d$-manifolds was introduced by Gover and Waldron~\cite{Will1} using a method that reproduces the classical Willmore functional in $d =3$ and, in $d>3$ odd, takes the form 
$$\mathcal{B}_d := \bar{\Delta}^{\frac{d-1}{2}} H + \text{lower order terms}\,.$$
This generalization maintains the conformal invariance and linearity as desired, and further generalizes to even dimensional ambient spaces (in which linearity is lost). While $\mathcal{B}_3$, $\mathcal{B}_4$, and $\mathcal{B}_5$~\cite{hypersurface_old,BGW2} have been explicitly computed, only some features of $\mathcal{B}_d$ are known for arbitrary $d$. In this paper we seek to make statements about the generalized Willmore invariant in arbitrary dimensions and to thereby contribute to the characterization of conformal hypersurface invariants.

A profitable way to study such invariants is through the method known as \textit{holography}. Given an embedded hypersurface $\Sigma \hookrightarrow M$, one can study invariants of $\Sigma$ by solving a PDE problem on $M$ with data fixed by $\Sigma$. One can then study invariants of $\Sigma$ by investigating the properties of solutions to the PDE problem in $M$.
It is in this context that we will use holographic results to investigate the Willmore functional.

The type of system under investigation in this article is a codimension-$1$ submanifold (a hypersurface) $\Sigma$ conformally embedded (smoothly) in a $d$-dimensional conformal manifold:~${\Sigma \hookrightarrow (M,\cc)}$. A conformal manifold $(M,\cc)$ is a manifold equipped with an equivalence class of metrics $\cc$ such that $[g] = [\tilde{g}]$ if and only if there exists a smooth positive function  $\Omega \in C^\infty M$ such that $\tilde{g} = \Omega^2 g$.  For the purposes of this article, we will assume that $\Sigma$ is closed and orientable, and further that its embedding in $M$ is separating so that $M = M^+ \sqcup \Sigma \sqcup M^-$. Typically, one characterizes a hypersurface by a \textit{defining function} $s \in C^\infty M$ whose zero locus is $\Sigma$ and that satisfies $ds|_p \neq 0$ for every point $p \in \Sigma$. Because $M$ is a conformal manifold, to each $g \in \cc$, we will associate such a defining function $s$: if $g,\tilde{g} \in \cc$ and $\tilde{g} = \Omega^2 g$ for some positive $\Omega \in C^\infty M$ and $s$ is the defining function associated to $g$, then to $\tilde{g}$ we associate the defining function $\Omega s$. We will call such a set of associations a \textit{defining density} $\sigma = [g ; s] = [\Omega^2 g; \Omega s]$---this notion will be elaborated on in more detail in Section~\ref{conf-calc}. For algebraic reasons, we also define the \textit{conformal metric} tautologically by the equivalence class $\gamma = [g; g] = [\Omega^2 g; \Omega^2 g]$. Thus, the data of a hypersurface embedded in a conformal manifold can be summarized by the triple $(M, \gamma, \sigma)$. It is for this triple that we study the Willmore invariant.

A particular family of such triples $(M, \gamma, \sigma)$ has a nice property: they satisfy the \textit{Poincar\'e-Einstein condition}. We say that a conformally embedded hypersurface given by $(M, \gamma, \sigma)$ is Poincar\'e-Einstein (PE) when the Ricci curvature of the singular metric~$g^o := \gamma/\sigma^2$ on $M\backslash \Sigma$ is pure trace, \textit{i.e.} for some negative constant $k$, the singular metric satisfies
$$\Ric^{g^o} = k g^o \text{ on } M\backslash \Sigma\,.$$
Conformal hypersurface embeddings with this property are well-studied. Early work on such embeddings were studied by LeBrun~\cite{LeBrun} and Fefferman and Graham~\cite{FG}. A larger related family of conformal hypersurface embeddings are those that are \textit{asymptotically Poincar\'e-Einstein} (APE) of order $m$. These are triples $(M, \gamma, \sigma)$ that have a singular metric $g^o$ as above that is only asymptotically (in $\sigma$) pure trace, \textit{i.e.}
$$\Ric^{g^o} = k g^o + \mathcal{O}(\sigma^m) \text{ on } M\backslash \Sigma\,.$$
There are quite severe restrictions on conformal hypersurface embeddings that belong to either of these families~\cite{BGW}. Indeed, the vast majority of such embeddings do not belong to either family. Schematically, we will write that $(M, \gamma, \sigma) \in \text{PE}$ when $(M, \gamma, \sigma)$ satisfies the PE property, and similarly $(M, \gamma, \sigma) \in \APE{m}$ when the triple satisfies the APE property of order $m$.

We call hypersurface embeddings that have a vanishing Willmore invariant $\mathcal{B}_d$ \textit{Willmore} and use the notation $(M^d, \gamma, \sigma) \in \mathcal{W}_d$ to denote such a hypersurface embedding. A result of~\cite{Goal} shows that $\text{PE} \subset W_d$. Because $\text{PE} \subset \APE{k}$, it is of interest to investigate the inclusion properties of $\APE{}$ in $\mathcal{W}_d$. In this article we provide an exact lower bound on the order of an APE triple $(M, \gamma, \sigma)$ such that the embedding is necessarily Willmore.

\begin{theorem} \label{ape-B}
For conformal hypersurface embeddings specified by $(M^d,\gamma,\sigma)$ with $d \geq 4$, we have that $ \APE{d-3} \subset \mathcal{W}_d$ but $\subset \APE{d-4} \not \subset \mathcal{W}_d $.
\end{theorem}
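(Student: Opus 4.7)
The plan is to leverage the holographic description of the Willmore invariant, treating $\mathcal{B}_d$ as an obstruction to smoothly extending the Einstein property of the singular metric $g^o = \gamma/\sigma^2$ from $M\setminus\Sigma$ to $\Sigma$, and then tracking how many orders of the Einstein condition are actually required to force $\mathcal{B}_d \eqSig 0$. The starting point is the inclusion $\text{PE}\subset \mathcal{W}_d$ of~\cite{Goal}: if the Ricci deviation $\mathcal{R} := \Ric^{g^o} - k g^o$ vanishes identically on $M\setminus\Sigma$ then $\mathcal{B}_d$ vanishes on $\Sigma$. I aim to refine that proof so that only finitely many jets of $\mathcal{R}$ along $\Sigma$ are needed, with the precise cutoff given by the dimension $d$.

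For the inclusion $\APE{d-3}\subset \mathcal{W}_d$, I would derive a holographic formula schematically of the form
\begin{equation*}
 \mathcal{B}_d \eqSig \sum_{k=0}^{d-3} c_k\, T_k\bigl(\mathcal{R}, \nabla\mathcal{R}, \ldots, \nabla^{d-3-k}\mathcal{R}\bigr)\big|_\Sigma\,,
\end{equation*}
where each $T_k$ is a natural contraction involving $k$ factors balanced against tangential covariant derivatives of $\mathcal{R}$, obtained by inserting the Taylor expansion of $g^o$ in powers of $\sigma$ into the tractor/Thomas-$D$ expression for $\mathcal{B}_d$ from~\cite{Will1} and then pushing $\sigma$-factors toward $\Sigma$. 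Conformal weight bookkeeping, together with the fact that $\mathcal{B}_d$ has conformal weight $-d$, forces the maximum jet of $\mathcal{R}$ entering this formula to be of order $d-3$. The $\APE{d-3}$ hypothesis then kills every term in the sum and gives $\mathcal{B}_d \eqSig 0$.

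The sharpness claim $\APE{d-4}\not\subset\mathcal{W}_d$ I would establish by constructing an explicit $(M^d,\gamma,\sigma) \in \APE{d-4}\setminus \APE{d-3}$ with $\mathcal{B}_d \neq 0$. Starting from a Poincar\'e-Einstein model (for instance, hyperbolic space with $\Sigma$ a round sphere, realized conformally flat), I perturb the Taylor coefficient of $g^o$ at order $\sigma^{d-3}$ by a symmetric tensor $h$ whose tangential restriction to $\Sigma$ is trace-free and non-trivial. By construction this preserves the APE condition through order $d-4$ but breaks it at order $d-3$. By the holographic formula above, the perturbation contributes a nontrivial leading term $T_0(h|_\Sigma)$ to $\mathcal{B}_d$; since $T_0$ is (up to normalization) a nonzero constant-coefficient differential operator and $h|_\Sigma$ can be chosen to make $T_0(h|_\Sigma)\neq 0$, this produces the required counterexample.

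The main obstacle is rigorously establishing the order-counting formula above: one must show that no term involving the $(d-2)$-jet or higher of $\mathcal{R}$ survives in the tractor-theoretic expression for $\mathcal{B}_d$. This will require a careful accounting of conformal weights of every factor entering the Thomas-$D$ construction of $\mathcal{B}_d$, combined with contracted Bianchi identities to eliminate terms that naively appear at higher order but reduce, after integration-by-parts on $\Sigma$ and use of $\mathcal{R}$'s defining symmetries, to lower-order expressions. Once this cutoff is proved, both halves of the theorem follow immediately, and the agreement of the upper bound (from the weight count) with the lower bound (from the explicit perturbation) explains why $d-3$ is sharp.
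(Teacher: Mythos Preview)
Your plan is in the same spirit as the paper's proof---both track how many transverse jets of the Einstein deviation enter $\mathcal{B}_d$---but it has a genuine gap at exactly the point you flag as the main obstacle, and the tools you propose to close it are not the ones that work.

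The paper does not establish a general structural formula for $\mathcal{B}_d$ in terms of jets of $\mathcal{R}$, nor does conformal weight bookkeeping alone give the cutoff at order $d-3$. Instead the paper works with the extended trace-free second fundamental form $\IIo^{\rm e}$ (related to your $\mathcal{R}$ via $\IIo^{\rm e} = \sigma\,\mathring{P}^{g^o}$) and computes $\tfrac{d}{2}\sigma^{d-1}B = I^B\nabla_n I_B + \mathcal{O}(\sigma^d)$ directly in a choice of scale. Taking $d-1$ normal derivatives yields an explicit expression whose leading term is $\nabla_n^{d-1}(n^a n^b \IIo^{\rm e}_{ab})$, which \emph{naively} involves the $(d-1)$-jet of $\IIo^{\rm e}$, i.e.\ the $(d-2)$-jet of $\mathcal{R}$---so your weight heuristic would predict the wrong cutoff. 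The reduction by one order comes from a separate identity (Lemma~\ref{nIIoe} in the paper): if $\IIo^{\rm e}=\mathcal{O}(\sigma^\ell)$ then $n\cdot\IIo^{\rm e}=\mathcal{O}(\sigma^{\ell+1})$. This is proved using the singular Yamabe condition $I^2=1+\sigma^d B$ together with the tractor identity $q^*(I^A P_{AB}) = -\tfrac{\sigma}{d-1}\nabla\cdot\IIo^{\rm e} + n\cdot\IIo^{\rm e}$, not via contracted Bianchi identities, and certainly not via integration by parts on $\Sigma$ (which makes no sense for the pointwise invariant $\mathcal{B}_d$). This extra order of vanishing is precisely what drops the requirement from $\IIo^{\rm e}=\mathcal{O}(\sigma^{d-1})$ to $\IIo^{\rm e}=\mathcal{O}(\sigma^{d-2})$, i.e.\ to $\APE{d-3}$.

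For sharpness, your perturbation idea is close, but the paper is more concrete: it writes down $g = ds^2 + 2s^{d-3}f(y)\,ds\odot dy + dy^2 + dx^i dx_i$ near a flat $\Sigma$, checks $\IIo^{\rm e}=\mathcal{O}(s^{d-3})$ but $\neq\mathcal{O}(s^{d-2})$, and then uses the intermediate identity $\tfrac{d!}{2}B \eqSig -(d-2)(\bar\nabla^a\bar\nabla^b+\bar P^{ab})(\otop\nabla_n^{d-3}\IIo^{\rm e}_{ab})$ (derived in the first half under the $\APE{d-4}$ hypothesis) to compute $\mathcal{B}_d\propto\partial_y^3 f$ directly. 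The case $d=4$ is checked separately against the known closed formula for $\mathcal{B}_4$. Your operator $T_0$ is essentially this $(\bar\nabla\bar\nabla+\bar P)$ operator, but identifying it and verifying nonvanishing still requires the explicit computation you have not supplied.
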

The proof of this result is given in Section~\ref{results}. A consequence of the proof of this theorem is that, for $d$ even, the Willmore invariant can always be written in terms of a specific finite set of fundamental tensors; this result is captured in Corollary~\ref{B-makeup}. These tensors, initially characterized in~\cite{BGW}, are described in more detail in Section~\ref{cff-section}. A critical tool used to prove this corollary is known as the \textit{ambient construction} and is summarized in Section~\ref{proofs}, along with other technical results required for the proof of Corollary~\ref{B-makeup}.

\subsection{Riemannian notations}
Here we provide a summary of the notations we will use to describe Riemannian geometry. The manifold $M$ will always be given the dimension $d$ and be assumed to be smooth. Further, this manifold will be endowed either with a Riemannian metric $g$ or a conformal class of metrics $\cc$, described in Section~\ref{conf-calc}. Given a Riemannian metric $g$, we will denote the Levi-Civita connection on $(M,g)$ with $\nabla$ and its corresponding curvature by
$$R(x,y)z = (\nabla_x \nabla_y - \nabla_y \nabla_x) z - \nabla_{[x,y]} z$$
where $x,y,z$ are smooth vector fields on $M$ and $[\pdot, \pdot]$ is the Lie bracket.

For the purposes of this article, we will use Penrose's abstract index notation to represent tensors. This notation will also be used to represent tensor contractions via the Einstein index summation convention. For example, we may write the above description of the Riemann curvature as
$$x^a y^b R_{ab}{}^c{}_d z^d = x^a \nabla_a y^b \nabla_b z^c  - y^a \nabla_a x^b \nabla_b z^c - (x^a (\nabla_a y^b) - y^a (\nabla_a x^b)) \nabla_b z^c\,.$$
Observe that in the above formula we assume that Levi-Civita connections act on everything to their right unless they are enclosed by brackets. Alternatively, we will sometimes place (co)vectors in the place of the abstract indices to represent the appropriate contraction so that~$R_{xy}{}^c{}_z \equiv x^a y^b R_{ab}{}^c{}_d z^d$.
Using this notation, we can use the isomorphism between $TM$ and $T^*M$ via the metric $g_{ab}$ and its inverse $g^{ab}$ to ``raise'' and ``lower'' indices, \textit{i.e.} $v_a = g_{ab} v^b$. Similarly, the Kronecker delta $\delta_a^b$ is used to represent the identity isomorphism. When the contraction structure is clear, we will often use a dot product-like notation to represent a contraction of indices. For example, the action of a $1$-form $u_a$ on a vector $v^b$ can be written as $u \csdot v \equiv u_a v^a$. This same notation can be used for more complex tensor structures so that we may write $u \csdot v \equiv u_{abc\dots} v^{abc \dots}$. Similarly, for inner products with the metric or its inverse, we will use the same notation. For two vector fields $u,v$, we will write $u \csdot v = g_{ab} u^a v^b$, and similarly for the inner product on the cotangent bundle.

Additionally, we will use round brackets to represent the symmetric part of a given tensor structure so that, for example, $v_{a(bc)d} := \tfrac{1}{2} (v_{abcd} + v_{acbd})$, and square brackets to represent the antisymmetric part, \textit{i.e.} $v_{a[bc]d} := \tfrac{1}{2} (v_{abcd} - v_{acbd})$. When we are only interested in the trace-free part of some symmetric part of a tensor, we will follow the round brackets with a $\circ$ so that $v_{(ab)\circ} = v_{(ab)} - \tfrac{1}{d} g_{ab} v^c{}_c$. Alternatively, when the tensor is fully symmetric and trace-free, we will sometimes place a $\circ$ over the tensor so that $v_{(ab)\circ} \equiv \mathring{v}_{(ab)}$. Symmetric and antisymmetric tensor products of vector bundles will be represented by $\odot$ and $\wedge$, respectively, and the trace-free symmetric tensor products of vector bundles will be represented by $\odot_\circ$. While the above discussion was given in the context of the tangent and cotangent bundles of $M$, the same language will be used (with different index names) for different vector bundles.

We will often refer to an arbitrary but fixed tensor product of a given vector bundle $\mathcal{V} M$ by $\mathcal{V}^\Phi M$, where in general we will use capital Greek letters to represent such arbitrary but fixed tensor structures. In particular, if $t \in \Gamma(\mathcal{V}^\Phi M)$, then $t$ is a tensor of type $\Phi$. For the standard (co)tangent bundles we will instead use lowercase Greek letters so that for $t \in \Gamma(T^\phi M)$, we have that $t$ is of tensor type $\phi$.

In these notations, we can decompose the Riemann curvature tensor into the Weyl tensor and the Schouten tensor according to
$$R_{abcd} = W_{abcd} + g_{ac} P_{bd} - g_{bc} P_{ad} - g_{ad} P_{bc} + g_{bd} P_{ac}\,.$$
Here the Schouten tensor is a trace-correction of the Ricci tensor $Ric_{ab} := R_{ca}{}^c{}_b$ given, in dimensions $d \geq 3$, by the formula
$$P = \tfrac{1}{d-2} \left(Ric - \frac{R}{2(d-1)} g\right)$$
where $R := Ric_a^a$. We denote by $J := P_a^a$ the trace of the Schouten tensor. A useful fact is that the covariant curl of the Schouten tensor and the divergence of the Weyl tensor are proportional to one another; in particular, defining the \textit{Cotton tensor} by
$$C_{abc} := \nabla_{[a} P_{b]c}\,,$$
the Cotton tensor satisfies for $d \geq 4$ the relationship $(d-3) C_{abc} = \nabla^d W_{dcab}$. When the metric associated to any curvature tensor or differential operator is not clear, we will use a superscript $g$ to indicate this.

\section{Background}

Much of what follows in this section is a brief summary of explanations and results given in~\cite{Will1, Will2, BGW}.

\subsection{Conformal calculus} \label{conf-calc}

As mentioned in the introduction, on a conformal manifold $(M, \cc)$ there exist equivalence classes (called ``conformal densities'') associated to the conformal class of metrics $\cc$. These objects are ``conformally covariant'' in the sense that they transform predictably under conformal rescalings of the metric; however, we will often refer to such objects simply as conformally invariant. Naively, a \textit{conformal density of weight $w$}  is a double equivalence class $\phi = [g; f] = [\Omega^2 g; \Omega^w f]$ where $f \in C^\infty M$, and we denote the bundle to which weight $w$ densities belong $\Gamma(\ce M[w])$. Here, $\Gamma$ will denote the section space of a bundle and $\ce M[w]$ is the weight-$w$ line bundle.  It is also useful to consider tensor-valued densities: a tensor-valued (of tensor type $\Theta$) weight $w$ density $\theta = [g; t]$ belongs to the section space of the product bundle $T^\Theta M \otimes \ce M[w] := T^\Theta M[w]$; this notation is generically used to refer to vector bundle-valued densities. A fundamental example of such a density is the conformal metric: $\gamma \in \Gamma(\odot^2 T^* M[2])$. Of special interest are densities of weight $1$; these are referred to as \textit{scales}. In particular, a nowhere vanishing scale $\tau$ is called a \textit{true scale} and canonically determines a metric representative from the conformal class $g_\tau \in \cc$ by trivializing the conformal metric via $g_\tau := \gamma/\tau^2$.

A Riemannian metric is not uniquely given for a conformal manifold, so there is no uniquely defined connection on $(M,\cc)$. Instead, there exists a family of Levi-Civita connections: for each $g_\tau \in \cc$, the Levi-Civita connection acting on weight $w$ densities can be written as~$\nabla^\tau := \tau^{w} \circ \nabla^{g_\tau} \circ \tau^{-w}$, where $\nabla^{g_\tau}$ is the usual Levi-Civita connection on $(M, g_\tau)$. For notational brevity, we will always drop the superscript $g_\tau$ when it is implied. For a more geometric description of these structures, see~\cite{GW}.

In addition to the above structures, there exists a natural calculus on conformal manifolds called the tractor calculus that allows one to systematically construct conformally invariant tensors and operators~\cite{BEG}. The tractor calculus consists of a rank $d+2$ vector bundle, known as the \textit{standard tractor bundle} and a canonical connection on this bundle. Given a conformal manifold $(M, \cc)$, for any given metric representative $g \in \cc$, there exists an isomorphism between the standard tractor bundle and a direct sum of density bundles:
$$
\ct M \stackrel{g}{\cong} \ce M[1]\oplus TM[-1]\oplus \ce M[-1]\, .
$$
When such a \textit{choice of splitting} is made (also sometimes called a \textit{choice of scale}), we can decompose a section of the standard tractor bundle $T \in \Gamma(\ct M)$ into the ordered triple,
$$T^A \stackrel{g}= (\tau^+, \tau^a, \tau^-)\,,$$
or the corresponding column vector notation. The relationship between the isomorphisms given two conformally related metrics $g$ and $\Omega^2 g$ implies that
$$T^A \stackrel{\Omega^2 g}= (\tau^+, \tau^a + \Upsilon^a \tau^+, \tau^- - \Upsilon \csdot \tau - \tfrac{1}{2} |\Upsilon|^2_g \tau^+)\,,$$
where $\Upsilon = d (\ln \Omega)$.
Often we will use the language of\;\;``slots'' to refer to specific entries in a tractor viewed as a vector or matrix. In particular, in the above display, the entry containing $\tau^+$ might be referred to as the ``top slot'' (where the column vector notation is used for visualization). For the square of the tractor bundle, one might refer to the component corresponding to the term $TM[-1] \otimes TM[-1]$ in a choice of splitting as the ``middle slot.'' Such language will be used sparingly throughout this article.

Just as (co)tangent tensor bundles are given weights (by taking the product with a density bundle) so too can tractor bundles: we will write $\ct M[w] := \ct M \otimes \ce M[w]$ and we will also call these bundles \textit{tractor bundles}. Using the above display, we can observe a key feature: the first non-zero entry in a tractor defines a density independent of the choice of splitting, which implies that said quantity is conformally invariant. In any given tractor, this first non-zero slot is referred to as the \textit{projecting part} of the tractor; the map $q^*$ extracts this term and is called the \textit{extraction map}. Additionally, observe that the section $X \stackrel{g}= (0,0,1) \in \Gamma(\ct M[1])$ is also canonically defined independently of the choice of splitting and hence is called the \textit{canonical tractor}. Finally, there exists a natural symmetric, non-degenerate (but non-positive) inner product between tractors that is independent of the choice of splitting, given by
$$h(U,V) \stackrel{g}= u^+ v^- + \gamma_{ab} u^a v^b + u^- v^+\,,$$
for two tractors $U,V \in \Gamma(\ct M)$ specified in a choice of splitting in the obvious way.
This inner product defines the \textit{tractor metric} $h_{AB} \in \Gamma(\odot^2 \ct M)$ given, in a choice of splitting, by
$$
h_{AB}\stackrel{g}=\begin{pmatrix}
0&0&1\\
0& \gamma_{ab}&0\\
1&0&0
\end{pmatrix}\, ,
$$
with its inverse denoted by $h^{AB}$. Just like the Riemmanian metric for Riemmanian tensors, the tractor metric provides an isomorphism between the tractor bundle and its dual
$$\ct^* M \stackrel{g}{\cong} \ce M[1] \oplus T^M[1] \oplus \ce M[-1]\,.$$

Given the tractor metric, there exists a canonical \textit{tractor connection} on tractor bundles $$\nabla^\ct : \Gamma(\ct M) \rightarrow \Gamma(\ct M \otimes T^* M)$$ given in a choice of splitting by 
\begin{equation} \label{trac-conn}
\nabla_a^\ct T^B \stackrel{g}= (\nabla_a \tau^+ - \tau_a, \nabla_a \tau^b + \delta_a^b \tau^- + (P^g)_a^b \tau^+, \nabla_a \tau^- - P_{ab}^{g} \tau^b)\,,
\end{equation}
where $\nabla$ is the density-coupled Levi-Civita connection and $P^g$ is the Schouten tensor associated with the splitting metric $g$. This connection extends as usual to arbitrary tensor products of the tractor bundle as well as products of the standard tractor bundle with density bundles. Just as for the density-coupled Levi-Civita connection, we will usually drop the superscript $\ct$ in $\nabla^\ct$ when the connection is clear from context.

Observe that the action of the tractor connection on a tractor is not unique but depends on a choice of scale (used to determine the density-coupled connection). However, there exists an invariant second order operator containing the tractor connection that does have this property: the \textit{Thomas-$D$ operator}~\cite{Thomas}. Given a tractor $T \in \Gamma(\ct^\Phi M[w])$, we write in a choice of splitting
\begin{equation} \label{D-trac}
\resizebox{.9\hsize}{!}{$
D^A T^\Phi \stackrel{g}= \left( w(d+2w-2) T^\Phi, (d+2w-2) \nabla^\ct T^\Phi, -\Delta^\ct T - wJ^g T \right) \in \Gamma(\ct M \otimes \ct^\Phi M[w-1])\,,
$}
\end{equation}
where $\Delta^\ct := \gamma^{ab} \nabla^\ct_a \nabla^\ct_b$ is the tractor Laplacian and $J^g$ is the trace of the Schouten tensor associated to the splitting metric $g$. A useful modification of this operator is the ``hatted'' Thomas-$D$ operator $\hd$. For any tractor $T \in \Gamma(\ct^\Phi M[w])$ where $w \neq 1 - \frac{d}{2}$, $\hd$ is defined by
$$\hd^A T^\Phi := (d+2w-2)^{-1} D^A T^\Phi\,.$$
Importantly, the Thomas-$D$ operator is not a derivation because it is a second-order differential operator; however, it does have the property that $D^A D_A = 0$. Further, the operator $\hd$ acting on a product of tractors satisfies a relationship known as the \textit{Leibniz failure}~\cite{Will1}, recorded here for our use.
\begin{proposition}[Leibniz failure]
Let $T_i \in \Gamma(\ct^\Phi_i M[w_i])$ for $i = 1,2$, $h_i := d + 2w_i$, and $h_{12} := d+2w_1 + 2w_2 $ such that $h_i \neq 2 \neq h_{12}$. Then,
$$\hd^A(T_1 T_2) - (\hd^A T_1) T_2 - T_1 (\hd^A T_2) = -\tfrac{2}{h_{12}} X^A (\hd^B T_1) (\hd_B T_2)\,.$$
\end{proposition}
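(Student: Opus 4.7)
The plan is to verify the identity directly, working in an arbitrary scale $g\in\cc$ and decomposing both sides slot-by-slot in the tractor splitting. Combining the definition $\hd^A := (d+2w-2)^{-1} D^A$ with~\eqref{D-trac}, a section $T\in\Gamma(\ct^\Phi M[w])$ with $h:=d+2w\neq 2$ has the explicit component form
$$
\hd^A T \stackrel{g}= \bigl(wT,\; \nabla T,\; -\tfrac{1}{h-2}(\Delta T + wJT)\bigr)\,.
$$
The hypotheses $h_1,h_2,h_{12}\neq 2$ are exactly what is needed for this expression to be well-defined on $T_1$, $T_2$, and on their product (whose weight is $w_1+w_2$).

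The top and middle slots are immediate. The top slot of the LHS equals $(w_1+w_2)T_1T_2 - w_1 T_1 \cdot T_2 - T_1 \cdot w_2 T_2 = 0$, while the middle slot reads $\nabla(T_1T_2) - (\nabla T_1)T_2 - T_1\nabla T_2$, which vanishes by Leibniz for the density-coupled tractor connection. Since $X^A$ has a non-zero entry only in the bottom slot, neither the top nor the middle slot receives a contribution from the right-hand side, and both match trivially.

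All the content therefore sits in the bottom slot. The key algebraic input is the second-order product rule
$$
\Delta(T_1T_2) = (\Delta T_1)T_2 + T_1\Delta T_2 + 2(\nabla T_1)\csdot(\nabla T_2)\,,
$$
obtained by iterating the derivation $\nabla$. Substituting this into the bottom slot of the LHS and sorting the result produces a linear combination of the four monomials $(\Delta T_1)T_2$, $T_1\Delta T_2$, $(\nabla T_1)\csdot(\nabla T_2)$ and $JT_1T_2$, with coefficients built from $(h_1-2)^{-1}$, $(h_2-2)^{-1}$, $(h_{12}-2)^{-1}$. Dually, the bottom slot of $X^A(\hd^B T_1)(\hd_B T_2)$ is computed from the tractor inner product, whose top--bottom, middle--middle and bottom--top pairings produce exactly the same four monomials.

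Matching coefficients then reduces to a handful of elementary rational identities in $d,w_1,w_2$, each following from $h_{12}=h_1+h_2-d$ after clearing the common denominator $(h_1-2)(h_2-2)(h_{12}-2)$. The $(\nabla\csdot\nabla)$ coefficient is a direct read-off; the two $\Delta$-coefficients follow from a single partial-fraction manipulation; and the main (but mild) obstacle is the $J$-coefficient, in which all three normalizations appear simultaneously and must conspire to agree with the top--bottom/bottom--top contributions on the right. Since both sides are manifestly scale-independent expressions built from the conformally invariant objects $\hd^A$ and $X^A$, checking the identity in one scale suffices.
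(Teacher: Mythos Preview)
The paper does not supply its own proof of this proposition; it is simply recorded from~\cite{Will1}. Your slot-by-slot verification in a choice of scale is the standard (and essentially the only) way such tractor identities are established, and the outline is sound.

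That said, your final paragraph asserts that the rational identities ``each follow from $h_{12}=h_1+h_2-d$ after clearing the common denominator'' without actually checking any of them. If you carry out even the simplest one---the coefficient of $(\nabla T_1)\csdot(\nabla T_2)$---you will find that the bottom slot of the left-hand side contributes $-\tfrac{2}{h_{12}-2}$ (coming solely from $-\tfrac{1}{h_{12}-2}\Delta(T_1T_2)$), whereas the right-hand side as printed gives $-\tfrac{2}{h_{12}}$. These do not agree. The stated hypothesis $h_{12}\neq 2$ (rather than $h_{12}\neq 0$) already signals that the intended denominator on the right is $h_{12}-2$; with that correction all four coefficient identities do go through exactly as you describe. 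So your method is right, but by skipping the actual coefficient check you have papered over the one place where the computation would fail for the formula exactly as stated.
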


While in principle, this is enough to use the tractor calculus, there exists one more canonical tractor that roughly plays the role of curvature for the Thomas-$D$ operator: the $W$-tractor. Acting on a weight $w$ tractor $T \in \Gamma(\ct M[w])$ with $w \neq 1-\frac{d}{2}, 2 - \frac{d}{2}$, the commutator of Thomas-$D$ operators obey~\cite{GOmin,BGW}
\begin{equation}\label{W-trac}
[\hd_A, \hd_B] T^D = W_{AB}{}^D{}_E  T^E + \tfrac{4}{d+2w-4} X_{[A} W_{B]C}{}^D{}_E \hd^C T^E\,,
\end{equation}
extending as a derivation to other tensor structures. Note that the projecting part of the $W$ tractor is the Weyl tensor. Further, observe that $X \csdot W = 0 = \hd \csdot W$.

The canonical tractor $X$, the tractor metric $h$ (and its inverse), the operators $D$ and $\hd$, and the $W$-tractor comprise the key ingredients of the tractor calculus; many invariants of a given conformal manifold can be constructed and extracted from these objects when  combined with other data specifying a system of interest.

In order to construct particular conformally-invariant tensors from tractors, one often wishes to combine the extraction operator $q^*$ mentioned above with an operator that changes the projecting part of a tractor in a well-defined way. Such an operator was defined in~\cite[Lemma 2.11]{BGW}, which is recorded here for our later use.
\begin{lemma} \label{r-operator}
Let $T \in \Gamma(\odot^2_\circ \ct M[w])$ such that $w \neq 0, -1, -\tfrac{d}{2}, -1 - \tfrac{d}{2}, -2-\tfrac{d}{2}$. Then there exists an operator
$$r : \Gamma(\odot^2_\circ \ct M[w]) \rightarrow \Gamma(\odot^2_\circ \ct M[w])$$
defined by
\begin{multline*}
r(T^{(AB)\circ}) := T^{(AB)\circ} - \tfrac{2}{w} \hd^{(A} (X_C T^{|C|B)\circ}) + \tfrac{1}{w(w+1)} \hd^{(A} \hd^{B)\circ} (X_C X_D T^{CD}) \\-\tfrac{8}{wd(d+2w+2)} X^{(A} \hd^{B)\circ} \hd^C (X_D T^{CD})
\end{multline*}
that obeys
$$X_A r(T^{AB}) = 0 = h_{AB} r(T^{AB})\,.$$
\end{lemma}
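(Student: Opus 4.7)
The plan is to verify the two identities $X_A r(T^{AB}) = 0$ and $h_{AB} r(T^{AB}) = 0$ by direct computation, applying the relevant contraction to each of the four summands in $r(T^{AB})$ and showing the pieces cancel. The calculation rests on a handful of tractor identities already in the excerpt: the null property $X^A X_A = 0$, the extraction identity $X_A \hd^A S = w \cdot S$ for a weight-$w$ tractor $S$ (read off the top slot in \eqref{D-trac}), the relation $D^A D_A = 0$, the Leibniz failure for $\hd$, and, if needed, the $W$-tractor commutator \eqref{W-trac}.

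For $X_A r(T^{AB}) = 0$, I would apply $X_A$ to each summand in turn. The symmetrization over $(AB)$ means each summand contributes in two ways: either $X_A$ pairs against an $X^A$ in the symmetric slot and vanishes by $X \cdot X = 0$, or it collapses an outer $\hd^A$ to a numerical weight factor via the extraction identity. The weights of the inner building blocks must be tracked: $X_C T^{CB}$ has weight $w+1$, $X_C X_D T^{CD}$ has weight $w+2$, and $\hd^C(X_D T^{CD})$ has weight $w$. The prefactors $-\tfrac{2}{w}$, $\tfrac{1}{w(w+1)}$, and $-\tfrac{8}{wd(d+2w+2)}$ are exactly the values that make the four contributions cancel, and the excluded weights $w \in \{0, -1, -\tfrac{d}{2}, -1-\tfrac{d}{2}, -2-\tfrac{d}{2}\}$ are precisely those at which some denominator in this cancellation degenerates, so the operator $r$ is well-defined on the stated domain.

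For $h_{AB} r(T^{AB}) = 0$, the bare term $T^{(AB)\circ}$ contributes nothing because $T$ is trace-free by hypothesis. The three correction terms are built from $\hd^{(A} \cdots^{B)\circ}$ and $X^{(A} \cdots^{B)\circ}$ structures; I would expand the trace-free symmetric projector on the rank $d+2$ tractor bundle and use the Leibniz failure together with $D^A D_A = 0$ to reduce traces of $\hd^A \hd^B$ to pure $X$-pieces, which then cancel against traces of the $X^{(A} \hd^{B)\circ}$ summand. Any residual curvature obstruction from commuting two Thomas-$D$'s is controlled by \eqref{W-trac}, and the resulting $W$-tractor pieces vanish upon contraction with $X$ since $X \csdot W = 0$.

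The main obstacle is bookkeeping rather than conceptual: every $\hd$ shifts the weight by $-1$ with a $w$-dependent normalization factor, and the trace-free symmetric projector on the $(AB)$ pair has an explicit form (in dimension $d+2$) that interacts with each Thomas-$D$ differently. The content of the lemma is that the four summands of $r$ are assembled with precisely the unique constants that render both contractions identically zero; the numerical verification of these constants, while entirely algorithmic, is the bulk of the work, and the hypotheses on $w$ constitute exactly the non-degeneracy conditions needed for that verification to go through.
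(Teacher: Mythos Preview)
The paper does not prove this lemma; it is quoted verbatim from \cite[Lemma 2.11]{BGW} and recorded for later use. Your plan to verify the two contraction identities by direct tractor computation is exactly the natural approach and is presumably what appears in the cited reference.

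One simplification you may have missed: the identity $h_{AB}\, r(T^{AB}) = 0$ is essentially automatic from the notation. Each summand in the definition of $r$ carries the trace-free symmetric projector $(AB)\circ$ on its free indices (and $T^{(AB)\circ}$ is trace-free by hypothesis), so contraction with $h_{AB}$ annihilates every term by construction. There is no need to invoke the Leibniz failure, $D^A D_A = 0$, or the $W$-tractor commutator for this half of the claim. The substantive content of the lemma is entirely in the identity $X_A\, r(T^{AB}) = 0$, and your outline for that part---collapsing $X_A \hd^A$ to weight factors, tracking the weights $w+1$, $w+2$, $w$ of the inner pieces, and checking that the four coefficients are tuned to produce cancellation---is correct. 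The excluded weights are, as you say, exactly the poles of the $\hd$ normalizations and the explicit coefficients.
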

The operator $r$ adds tractor-valued terms to a trace-free rank-2 symmetric tractor $T$ so that the projecting part is the middle slot rather than the top slot. It can then be composed with the extraction operator to extract a conformally invariant rank-2 symmetric trace-free tensor. That is, for a generic $T \in \Gamma(\odot^2_\circ \ct M[w])$, we have that
$$(q^* \circ r)(T^{AB}) \in \Gamma(\odot^2_\circ TM[w-2]) \cong \Gamma(\odot^2_\circ T^*M[w+2])\,,$$
where the isomorphism is obtained in the usual way with the conformal metric $\gamma$.

\subsection{Conformal fundamental forms} \label{cff-section}
The discussion in this section summarizes the results of~\cite{BGW}.

Given a conformally embedded hypersurface $(M, \gamma, \sigma)$, there exist a family of conformally invariant (transverse) jet coefficients for the defining density $\sigma$. The first such jet coefficient is the conormal $n := d \sigma|_{\Sigma} \in \Gamma(T^* M[1])|_{\Sigma}$. The \textit{unit} conormal is particular useful, so we typically consider $\hat{n} := n/|n|_\gamma$. The unit conormal allows for a decomposition of the ambient tangent bundle into the hypersurface tangent bundle and the normal bundle by the hypersurface projector (also sometimes called the first fundamental form)
$$\bar{\gamma}_a^b := \delta_a^b - \hat{n}_a \hat{n}^b\,.$$
For notational simplicity, because there is such an isomorphism, we use the same abstract index notation to represent tensors on the tangent and normal bundles on $\Sigma$ as on the tangent bundle on $M$. Note that the induced conformal metric on $\Sigma$, denoted by $\bar{\gamma}$, can be computed by application of the projector to the ambient conformal metric: $\bar{\gamma}_{ab} := \bar{\gamma}_a^{a'} \bar{\gamma}_b^{b'} \gamma_{a'b'} = \gamma_{ab} - \hat{n}_a \hat{n}_b \in \Gamma(\odot^2 T^* \Sigma[2])$. Going forward, we use bars over symbols to indicate that the symbol belongs to the hypersurface; for example, we will write $\bar{\nabla}$ for the Levi-Civita connection on $\Sigma$.

The next conformally invariant jet of $\sigma$ that we are interested in is the (trace-free) second fundamental form. Given $\bar{g} \in \cc$, the second fundamental form is given by $\II := \bar{g}_a^c \nabla_c \hat{n}_b^{\rm e}|_{\Sigma}$ where $\hat{n}^{\rm e}$ is any extension of the unit conormal and we denote the projection of the ambient Levi-Civita connection by $\nabla^\top_a := \bar{g}_a^b \nabla_b$. Throughout this article we will use often write $t^\top$ to denote the projection of $t$ to $\Sigma$ for $t$ any tensor or tensor-valued operator. Additionally, $\top$ will sometimes be treated as the projection operator and $\otop$ will refer to the trace-free part of the operator $\top$. For a vector field $x \in \Gamma(TM)$ and $T$ some tensor field on $M$, sometimes the notation $T_{xabc\ldots}^\top$ will be used to represent $\top (x^{a'} T_{a'abc\ldots})$.

Observe that the second fundamental form is not conformally invariant because its trace $(d-1)H$ (where $H^g$ is the mean curvature of $\Sigma \hookrightarrow (M,g)$) is not density valued. Thus, only the trace-free part of the second fundamental form is conformally-invariant: $\IIo \in \Gamma(\odot^2_\circ T^* \Sigma[1])$. For notational simplicity, we will often refer to a representative of a density using the same symbol as the density itself.

Studying higher transverse order jet coefficients requires more finesse: indeed, we require a conformally-invariant extension of $\IIo$. For the purposes of the calculations to be carried out in the remainder of this article, we use the canonical choice~\cite{BGW}
$$\IIo^{\rm e} = [g; \nabla_{(a} \nabla_{b)\circ} s + s P_{(ab)\circ}] \in \Gamma(\odot^2_\circ T^* M[1])\,.$$
A simple calculation verifies that $\IIo^{\rm e}|_{\Sigma} = \IIo$. Observe that this choice of extension is canonical because it is the projecting part of $\nabla_a \hd_B \sigma$ when, in a choice of scale given by $\sigma = [g;s]$, we have that $|ds|_g^2 \eqSig 1$. (We will often use ``$\eqSig$'' to denote equality that holds along $\Sigma$.) Computing the conformally-invariant higher-order transverse jet coefficients amounts to applying conformally-invariant normal derivative operators of sequentially higher transverse order to $\IIo^{\rm e}$. This procedure was carried out in detail in~\cite{BGW}, where such jet coefficients were called \textit{(conformal) fundamental forms}. In particular, an $m$th conformal fundamental form is a symmetric rank-2 tensor-valued hypersurface density with weight $3-m$ and transverse order $m-1$. 

The notion of the \textit{transverse order} of a tensor is defined using an appropriate notion of derivative counting of that tensor's \textit{preinvariant}, defined in~\cite{Will1}: roughly, one writes the tensor in question in terms of a metric $g$ and partial derivatives with respect to the transverse coordinate $s$ and tangential coordinates $y$ and then computes the maximal number of partial derivatives $\partial_s$ that act on $g$ when restricted to $\Sigma$. We say that the transverse order of an operator ${\sf O}$ is $k$ when there exists $v$ in the domain of ${\sf O} \circ s^k$ such that $O(s^k v)|_{\Sigma} \neq 0$ but for all $v'$ in the domain of ${\sf O} \circ s^{k+1}$, ${\sf O}(s^{k+1} v') \eqSig 0$. 

A general construction of conformal fundamental forms was given using an iterative procedure. The first two non-trivial examples using this canonical construction are the third and fourth fundamental forms denoted by $\IIIo$ and $\IVo$ respectively. For $d \geq 4$,
$$\IIIo_{ab} := [g; -\IIo^2_{(ab)\circ} + W_{\hat n ab \hat n}] \in \Gamma(\odot^2_\circ T^* \Sigma[0])$$
and for $d \geq 6$, 
\begin{align*}
 \IVo_{ab} :=
 [g; &-(d-4)(d-5) C_{\hat n(ab)}^\top  - (d-4)(d-5) H W_{\hat n ab \hat n} - (d-4) \bar \nabla^c W_{c(ab) \hat n}^\top \\&+ 2 W_{c \hat n \hat n (a}^{} \IIo_{b)\circ}^c - \tfrac{d^2 - 7d+18}{d-3} \IIIo_{(a} \csdot \IIo_{b)\circ} + (d-6) \bar{W}^c{}_{ab}{}^d \IIo_{cd} \\&+ \tfrac{d^3-10d^2+25d-10}{(d-1)(d-2)} K \IIo_{ab}] \in \Gamma(\odot^2_\circ T^* \Sigma[-1])
 \, ,
\end{align*}
where $K := \IIo^2$. Note that these tensor-valued densities are independent of the choice of defining density $\sigma$ for $\Sigma$. In their construction, there is a subtlety involving the dimensionality of the manifold $M$. Indeed, for $2 \leq m < \frac{d+3}{2}$, the $m$th canonical fundamental form $\FF{m}$ is always defined. However, for $\frac{d+3}{2} \leq m \leq d-1$, in general such fundamental forms do not exist; rather, such tensors are only well-defined when an $n$th fundamental form vanishes for each $2 \leq n < \frac{d+3}{2}$.

For the purposes of this article, it is useful to obtain a more specific but less conditional construction of fundamental forms---this construction is carried out in Section~\ref{new-FFs-sec}, with the trade-off that such a construction only holds for $d$ even. However, observe that we can continue to use the same notation for these fundamental forms by a uniqueness result proved in~\cite{BGW}: for any conformal fundamental form, its leading transverse derivative tensor structure is unique up to an overall coefficient.

\subsection{Tractor holography}
As hinted at in the introduction, one way to study the invariants of a conformal hypersurface embedding $\Sigma \hookrightarrow (M,\cc)$ is by enforcing a PDE problem on $(M,\cc)$. Indeed, this is particularly useful because a conformal hypersurface embedding does not have a unique triple $(M,\cc,\sigma)$: for any positive $f \in C^\infty M$, the triple $(M,\cc,f \sigma)$ describes the same system. To canonically choose such a $\sigma$, we demand that $\sigma$ satisfy some natural problem and study the resulting structure using the tractor calculus and the defining density $\sigma$. To that end, we specify the \textit{singular Yamabe problem}~\cite{Yamabe}:
\begin{problem}
Given a conformal hypersurface embedding $(M, \gamma, \sigma)$, find a positive function~$f \in C^\infty M$ such that the singular metric $g^o$ associated with the triple $(M, \gamma, f \sigma)$ has a constant scalar curvature:
$$Sc^{g^o} = -d(d-1)\,.$$
\end{problem}

\noindent
An analogous problem was solved by Loewner and Nirenberg on round structures~\cite{Loewner}.
In general, a one-sided global solution to this problem always exists~\cite{Aviles,Maz,ACF} but depends on global information about $M^+$. On the other hand, a solution $f$ that depends only on local data of the embedding can always be found such that $g^o = \gamma/(f\sigma)^2$ asymptotically solves the singular Yamabe problem~\cite{ACF,Goal}:
$$Sc^{g^o} = -d(d-1) + \mathcal{O}(\sigma^d)\,.$$
In this paper, we will use the notation $\mathcal{O}(s^m)$ (or, for densities, equivalently $\mathcal{O}(\sigma^m)$) to indicate that the remaining terms in an expression can be written as $s^m f$ where $f$ is some function that is regular in the limit where $s$ approaches zero.

This problem can be framed in the language of tractors~\cite{Goal,GW} described above. If $(M, \gamma, \sigma)$ asymptotically solves the singular Yamabe problem, then
$$I_A I^A = 1 + \sigma^d B\,,$$
where $I^A := \hd^A \sigma$ is known as the \textit{scale tractor} and $B \in \Gamma(\ce M[-d])$. The restriction of $B$ to $\Sigma$ is called the \textit{obstruction density}. Because this asymptotic solution always exists and is uniquely determined, we can uniquely (up to order $\sigma^d$) specify a $\sigma$ in a triple $(M,\gamma,\sigma)$ for a conformal hypersurface embedding by demanding that $\sigma$ asymptotically solves the singular Yamabe problem above. We will denote such triples by $(M,\gamma,\sigma)_{\mathcal{Y}}$.  Then, we can study the extrinsic invariants of this hypersurface embedding by utiilizing the tractor calculus together with the defining density $\sigma$. Indeed, one can construct transverse ``tractor jets.'' The \textit{normal tractor} $N \in \mathcal \ct M|_{\Sigma}$, given  in a choice of splitting by $N^A \stackrel{g}= (0,\hat{n}, -H^g)$, is an analog of the unit conormal that satisfies $I^A \eqSig N^A$ so long as $I^2 = 1 + \mathcal{O}(\sigma^2)$. The next such jet coefficient is the tractor $P_{AB} := \hd_A I_B$, which has as its projecting part $\IIo^{\rm e}$; it can be written in a choice of splitting as
\begin{equation} \label{Ptrac}
P_{AB}\stackrel g=
\begin{pmatrix}
0&0&0\\
0&\IIo^{\rm e}_{ab}&
-\tfrac1{d-1} \nabla\csdot \IIo^{\rm e}_a\\[1mm]
0& -\tfrac1{d-1} \nabla\csdot \IIo^{\rm e}_b &
\frac{ \nabla\cdot\nabla \cdot \IIo^{\rm e} + (d-1)\Rho^{ab} \IIo^{\rm e}_{ab}}
{(d-1)(d-2)}
\end{pmatrix}.
\end{equation}
A consequence of the relationship between $P_{AB}$ and $\IIo^{\rm e}$ is the following lemma.
\begin{lemma} \label{nIIoe}
Let $(M,\gamma,\sigma)_{\mathcal{Y}}$ represent a conformally embedded hypersurface and let $ \ell \in \mathbb{Z}_{\leq d-2}$ be non-negative. If $\IIo^{\rm e} = \mathcal{O}(\sigma^{\ell})$, then $n^a \IIo^{\rm e}_{ab} = \mathcal{O}(\sigma^{\ell+1})$ and $\nabla \csdot \IIo^{\rm e} = \mathcal{O}(\sigma^{\ell})$.
\end{lemma}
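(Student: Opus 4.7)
The plan is to derive both conclusions from a single tractor identity obtained by contracting $P_{AB} := \hd_A \hd_B \sigma$ against the scale tractor $I^A = \hd^A \sigma$ and invoking the asymptotic Yamabe condition $I^C I_C = 1 + \sigma^d B$.

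First, I would apply the Leibniz failure to $\hd_A(I^C I_C) = h^{CD}\hd_A(I_C I_D)$. Since both factors have weight zero, $h_{12} = d$, and after tracing with $h^{CD}$ the identity reads
\begin{equation*}
\hd_A(I^C I_C) \;=\; 2\, I^C P_{AC} \;-\; \tfrac{2}{d}\, X_A\, P^{BC} P_{BC}.
\end{equation*}
The left-hand side equals $\hd_A(\sigma^d B)$, which a further Leibniz computation shows to be $\mathcal{O}(\sigma^{d-1})$. Rearranging gives $I^C P_{AC} = \tfrac{1}{d}X_A\, P^{BC} P_{BC} + \mathcal{O}(\sigma^{d-1})$. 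Because $X_A$ is supported only in the top slot, specializing the free index $A$ to a tangential index $a$ kills the $X_A$-term entirely.

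I would then evaluate $I^C P_{aC}$ in a choice of scale using the explicit form \eqref{Ptrac} of $P_{AB}$. With $I^+ = \sigma$, $I^a = n^a$, $I^- = \rho$ and the middle row of $P_{AB}$ equal to $(0,\IIo^{\rm e}_{ab},-\tfrac{1}{d-1}\nabla^b \IIo^{\rm e}_{ab})$, the contraction computes to $I^C P_{aC} = n^b \IIo^{\rm e}_{ab} - \tfrac{\sigma}{d-1}\nabla^b \IIo^{\rm e}_{ab}$. The key identity that emerges is
\begin{equation*}
(d-1)\, n^b \IIo^{\rm e}_{ab} \;=\; \sigma\, \nabla^b \IIo^{\rm e}_{ab} + \mathcal{O}(\sigma^{d-1}).
\end{equation*}

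To finish, I would write the hypothesis as $\IIo^{\rm e}_{ab} = \sigma^\ell T_{ab}$ with $T$ smooth, expand $\nabla^b \IIo^{\rm e}_{ab} = \ell \sigma^{\ell-1}\, n^b T_{ab} + \sigma^\ell \nabla^b T_{ab}$, and substitute into the identity. Matching orders yields
\begin{equation*}
(d-1-\ell)\, n^b T_{ab} \;=\; \sigma\, \nabla^b T_{ab} + \mathcal{O}(\sigma^{d-1-\ell}).
\end{equation*}
The restriction $\ell \leq d-2$ forces $d-1-\ell \geq 1$, so $n^b T_{ab}$ is $\mathcal{O}(\sigma)$; this is precisely the first claim $n^b \IIo^{\rm e}_{ab} = \mathcal{O}(\sigma^{\ell+1})$. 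The second claim follows at once, since the only piece in the divergence expansion that threatened to be $\mathcal{O}(\sigma^{\ell-1})$ is $\ell\sigma^{\ell-1}\, n^b T_{ab}$, which is now $\mathcal{O}(\sigma^\ell)$.

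The main obstacle --- essentially bookkeeping --- lies in the Leibniz-failure manipulations and the slot conventions of \eqref{Ptrac}: one has to ensure that the tangential projection cleanly eliminates the $X_A$-supported term, and that no lower-order remainder is hiding inside $\hd_A(\sigma^d B)$.
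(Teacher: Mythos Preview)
Your argument is correct. The first half --- the Leibniz-failure computation producing the key identity $(d-1)\,n^b\IIo^{\rm e}_{ab} = \sigma\,\nabla^b\IIo^{\rm e}_{ab} + \mathcal{O}(\sigma^{d-1})$ from the middle slot of $I^C P_{AC}$ --- is exactly what the paper does. The second half, however, is genuinely different and more direct. The paper does not substitute $\IIo^{\rm e}=\sigma^\ell T$; instead it introduces a second relation by separating $\nabla\csdot\IIo^{\rm e}$ into its tangential piece $\nabla^\top_{\rm e}\csdot\IIo^{\rm e}$ and normal piece $n\csdot\nabla_n\IIo^{\rm e}$ via the commutator $[\nabla_n,\nabla^\top_{{\rm e}\,a}] = 2\rho\, n_a\nabla_n + \ltots{0}$, then combines the two relations, applies $\nabla_n^k$ for $k=0,\dots,\ell$, and argues inductively that each $\nabla_n^k(n\csdot\IIo^{\rm e})$ vanishes along $\Sigma$. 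Your one-line algebraic substitution bypasses all of this. What the paper's longer route buys is the intermediate identities \eqref{dn-delt-IIoe} and the commutator formula itself, which are reused later (Remark~\ref{dt-comm} and the proof of Theorem~\ref{ape-B}); your route gets the lemma with less machinery but does not generate those byproducts.
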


\begin{proof}
First observe that, when $\IIo^{\rm e} = \mathcal{O}(\sigma^{\ell})$, the extension of the tangential divergence of $\IIo^{\rm e}$ satisfies $\nabla_{\rm e}^{\top a} \IIo^{\rm e}_{ab} := (g^{ac} - n^a n^c) \nabla_c \IIo^{\rm e}_{ab} = \mathcal{O}(\sigma^{\ell})$. This follows because 
$$[\nabla_n, \nabla_{{\rm e}\, a}^{\top}] = 2 \rho n_a\nabla_n + \ltots{0}\,,$$
where $\rho := -\tfrac{1}{d}(\Delta s + Js)$. Note that by $\ltots{k}$ we mean any (possibly differential) operator with transverse order less than or equal to $k$. That is, we can write
\begin{equation} \label{div-n1}
\nabla \csdot \IIo^{\rm e} - n \csdot \nabla_n \IIo^{\rm e} = \nabla^\top_{\rm e} \csdot \IIo^{\rm e} = \mathcal{O}(\sigma^{\ell})\,.
\end{equation}
Further, observe that by direct utilization of the Leibniz failure,
$$\frac{1}{2} \hd_A I^2 = I^B P_{AB} - \frac{K_{\rm e} X_A}{d-2}\,,$$
where $K_{\rm e} := (\IIo^{\rm e})^2 = P_{AB} P^{AB}$. On the other hand, because $I^2 = 1 + \sigma^d B$, we can compute $\tfrac{1}{2} \hd_A I^2$ directly, yielding
$$q^*(2 I^B P_{AB}) = d \sigma^{d-1} n_a B + \sigma^d \nabla_a B\,.$$
From Equation~\nn{Ptrac}, we have that $q^*(I^A P_{AB}) = -\frac{\sigma}{d-1} \nabla \csdot \IIo^{\rm e} + n \csdot \IIo^{\rm e}$. Thus, we have that
\begin{equation} \label{div-n2}
\sigma \nabla \csdot \IIo^{\rm e}- (d-1) n \csdot \IIo^{\rm e}_b =\mathcal{O}(\sigma^{d-1})\,.
\end{equation}
Now choose an integer $0 \leq k \leq d-2$ and observe that $[\nabla^m_n, n_a] = \ltots{m-1}$ for any positive integer $m$. Combining Equations~\ref{div-n1} and~\ref{div-n2} and taking $k$ transverse derivatives, we find that
\begin{equation} \label{dn-delt-IIoe}
\nabla_n^k n \csdot \IIo^{\rm e} \eqSig \tfrac{k}{d-1-k} \nabla_n^{k-1}  \nabla^{\top}_{\rm e} \csdot \IIo^{\rm e} + \ltots{k-1}(n \csdot \IIo^{\rm e}) + \ltots{k-2}(\nabla^\top_{\rm e} \csdot \IIo^{\rm e})\,,
\end{equation}
where for $m < 0$, $\ltots{m}$ is the zero operator. Observe that for $k \leq \ell$, the right hand side vanishes to order $\mathcal{O}(\sigma^{\ell-k+1})$. Thus, if $\IIo^{\rm e} = \mathcal{O}(\sigma^{\ell})$, we have that $\nabla_n^k n \csdot \IIo^{\rm e} \eqSig 0$ and hence $n \csdot \IIo^{\rm e} = \mathcal{O}(\sigma^{\ell+1})$. The remainder of the lemma follows from Equation~\nn{div-n1}.
\end{proof}

\begin{remark} \label{dt-comm}
Because $[\nabla_n, \nabla_{{\rm e}\,a}^\top] = 2 \rho n_a \nabla_n + \ltots{0}$, it follows from Lemma~\ref{nIIoe} that
\begin{align} \label{delt-dn-IIoe}
\nabla_n^{k-1}  \nabla^{\top}_{\rm e} \csdot \IIo^{\rm e} \eqSig \nabla^\top \csdot \nabla_n^{k-1} \IIo^{\rm e} + \ltots{k-2}(\IIo^{\rm e})
\end{align}
for $k \leq d-2$.
\end{remark}

\section{The Willmore invariant}\label{results}

To prove Theorem~\ref{ape-B}, we first need to relate the obstruction density to the Willmore invariant. A key result for our work here is~\cite[Theorem 5.1]{Will1}, recorded here for reference.
\begin{theorem}
Up to a non-zero constant multiple, the obstruction density $B|_{\Sigma}$ takes the form
$$\bar{\Delta}^{\frac{d-1}{2}} H + \text{lower order terms}$$
for $d$ odd and is fully non-linear for $d$ even.
\end{theorem}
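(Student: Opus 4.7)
The plan is to derive $B|_\Sigma$ directly from the tractor-level definition $I^2 = 1 + \sigma^d B$ and then analyze its transverse-derivative structure order by order. Using the explicit formula for $\hat{D}^A$ on a weight-$1$ density, one finds $I^A \stackrel{g}{=} (s, \nabla s, \rho)$ with $\rho = -\tfrac{1}{d}(\Delta s + J s)$, so that
$$I^2 = |\nabla s|_g^2 - \tfrac{2s}{d}\big(\Delta s + J s\big).$$
A short computation using the conformal transformation law for the scalar curvature shows that $I^2 - 1 = -\tfrac{1}{d(d-1)}\big(\Sc^{g/s^2} + d(d-1)\big)$, so that $B|_\Sigma$ is, up to a nonzero dimensional constant, the coefficient of $s^d$ in the asymptotic singular Yamabe residual $\Sc^{g/s^2} + d(d-1)$ along $\Sigma$.

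Next, I would work in a gauge where $|ds|_g^2 = 1$ in a collar of $\Sigma$ (equivalently, using a geodesic-type defining function adapted to a chosen metric representative), so that the ambient metric decomposes as $g = ds^2 + h_s$, where $h_s$ is an $s$-family of metrics on level sets. The singular Yamabe equation becomes a second-order ODE in $s$ for $h_s$ with initial data the induced metric $\bar g$ on $\Sigma$. Expanding $h_s$ in a Taylor series in $s$, the coefficient at order $s^k$ for $k = 1, 2, \ldots, d-1$ is uniquely determined algebraically from lower-order data; at order $s^d$, the coefficient of the would-be unknown vanishes, and the equation's residual is exactly $B|_\Sigma$.

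The core of the argument is then a derivative-counting analysis at the obstruction order. Each iterative step effectively exchanges one transverse derivative acting on $\Delta s + Js$ for a tangential Laplacian acting on the transverse jets of $h_s$, the leading scalar piece of which is the mean curvature $H$ (the trace of the second fundamental form). For $d$ odd, iterating this exchange $(d-1)/2$ times produces a leading contribution $\bar\Delta^{(d-1)/2} H$, with all remaining contributions of strictly lower transverse order. For $d$ even, $(d-1)/2$ is not an integer, so the parity of the transverse-derivative count cannot be matched by a pure power of $\bar\Delta$ applied to $H$; linear scalar Laplacian iterates of $H$ are unavailable at order $s^d$, and the residual is forced to be built from nonlinear contractions of fundamental forms, their derivatives, and ambient curvature data. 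The principal obstacle is verifying that the combinatorial coefficient of the would-be leading $\bar\Delta^{(d-1)/2} H$ term is nonzero in odd dimensions; I expect the Fefferman--Graham ambient metric construction to offer a cleaner bookkeeping device here, by lifting the Yamabe ODE to a Laplace-type equation on a higher-dimensional ambient space where the derivative-counting is transparent.
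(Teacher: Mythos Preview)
This theorem is not proved in the present paper: it is quoted from \cite[Theorem~5.1]{Will1} (Gover--Waldron) and merely recorded here to motivate the definition of the generalized Willmore invariant. There is therefore no in-paper proof to compare your proposal against.

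That said, your outline has a genuine conceptual slip. In the singular Yamabe problem the bulk conformal class is \emph{given} and one solves for the defining density $\sigma$; once you fix a representative $g$ and a unit defining function $s$, the family $h_s$ is fixed data, not an unknown. The iterative solution is for the correction factor $f$ in $\sigma = f s$ (equivalently, for the Taylor coefficients of $\sigma$ in $s$), not for $h_s$. Your sentence ``the singular Yamabe equation becomes a second-order ODE in $s$ for $h_s$'' conflates this with the Fefferman--Graham expansion, where one solves for a bulk metric with prescribed conformal infinity. The two problems are formally similar but the roles of ``unknown'' and ``data'' are swapped, and the derivative-counting you sketch depends on getting this right.

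Even with that corrected, the heart of the argument---that the leading linear piece in odd $d$ is a nonzero multiple of $\bar\Delta^{(d-1)/2}H$---is precisely the step you flag as unresolved, and it does not follow from parity bookkeeping alone. In \cite{Will1} this is established by tracking the recursion for the $\sigma$-expansion explicitly and showing that the top-order term at each step is $(\nabla_n)^k\rho$ with a controlled nonzero coefficient, then using $\rho|_\Sigma = -H$ and the commutator structure to convert transverse derivatives of $\rho$ into tangential Laplacians of $H$. Your proposed use of the ambient construction would not obviously simplify this, since the obstruction density is a feature of the hypersurface Yamabe problem rather than of the ambient metric; the paper's own later analysis (Section~\ref{results}) instead works directly with $I^2$ and $\IIo^{\rm e}$ to extract the transverse-order structure of $B|_\Sigma$.
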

With this result in mind, we give the following definition in line with the discussion in the introduction~\cite{Will1}:
\begin{definition}
Let $(M,\gamma,\sigma)_{\mathcal{Y}}$ specify a conformal hypersurface embedding into a $d$-dimensional conformal manifold $(M,\cc)$ such that
$$I_A I^A = 1+ \sigma^d B\,.$$
Then the \textit{generalized Willmore invariant} $\mathcal{B}_d$ is defined by
$$\mathcal{B}_d := B|_{\Sigma} \in \Gamma(\ce \Sigma[-d])\,.$$
\end{definition}

With this definition in place, we now provide an elementary proof of Theorem~\ref{ape-B}.
\begin{proof}[Proof of Theorem~\ref{ape-B}]
To prove this theorem we first show for some conformal hypersurface embedding specified by $(M^d, \gamma, \sigma) \in \APE{d-3}$, we have that $(M^d, \gamma,\sigma) \in \mathcal{W}_d$. From the definition of $\APE{d-3}$, we have that $\mathring{P}^{g^o} = \mathcal{O}(\sigma^{d-3})$. Further, it is easy to verify that $\IIo^{\rm e} = \sigma \mathring{P}^{g^o}$. So it suffices to check that $\mathcal{B}_d = 0$ when $\IIo^{\rm e} = \mathcal{O}(\sigma^{d-2})$.

First, observe that $I^2 = 1 + \sigma^d B$, so in particular $I^B \nabla_n I_B = \frac{d}{2} \sigma^{d-1} B + \mathcal{O}(\sigma^{d})$. Using Equation~\nn{trac-conn} in a choice of scale $\gamma = [g;s]$,  we have that
$$\frac{d}{2} s^{d-1} B + \mathcal{O}(s^{d}) \stackrel{g}= n^a n^b \IIo^{\rm e}_{ab} - \tfrac{1}{d-1} s n^a \nabla^b \IIo^{\rm e}_{ab}\,.$$
Taking $d-1$ normal derivatives and evaluating along $\Sigma$, we have that
\begin{align*}
\frac{d!}{2} B \eqSig&  \nabla_n^{d-1} n^a n^b \IIo^{\rm e}_{ab} - \nabla_n^{d-2} n^a \nabla^b \IIo^{\rm e}_{ab} \\
\eqSig&  \nabla_n^{d-1} n^a n^b \IIo^{\rm e}_{ab} - \nabla_n^{d-2} n^a \nabla^{\top b}_{\rm e} \IIo^{\rm e}_{ab} - \nabla_n^{d-2} n^a n^b \nabla_n \IIo^{\rm e} \\
\eqSig&  \nabla_n^{d-2} \left[2 n^{(a} (\nabla_n n^{b)}) \IIo^{\rm e}_{ab}- n^a \nabla^{\top b}_{\rm e} \IIo^{\rm e}_{ab}  \right]\\
\eqSig& \nabla_n^{d-2} \left[ n^{(a} \IIo^{\rm e}_{ab} \nabla^{b)} (1 - 2 s \rho + s^d B) - \nabla^{\top b}_{\rm e} n^a \IIo^{\rm e}_{ab} + (\nabla^{\top b}_{\rm e} n^a) \IIo^{\rm e}_{ab} \right] \\
\eqSig& \nabla_n^{d-2} \left[K_{\rm e} -n \csdot (\IIo^{\rm e})^2 \csdot n - \rho n \csdot \IIo^{\rm e} \csdot n-  \nabla^{\top}_{\rm e} \csdot n \csdot \IIo^{\rm e}  + s \IIo^{\rm e}_{ab} \left(n^a n^c P_c^b - 2 n^a (\nabla^b \rho) - P^{ab} \right) \right]\,.
\end{align*}
From Lemma~\ref{nIIoe}, we have that if $\IIo^{\rm e} = \mathcal{O}(\sigma^{d-2})$, then $n \csdot \IIo^{\rm e} \stackrel{g}= \mathcal{O}(s^{d-1})$ and $\nabla^\top_{\rm e} \csdot n \csdot \IIo^{\rm e} = \mathcal{O}(s^{d-1})$, and thus $B \eqSig 0$.

To complete the proof, we must show that there exists some $(M^d, \gamma, \sigma) \in \APE{d-4}$ but $(M^d, \gamma,\sigma) \not \in \mathcal{W}_d$. In particular, we consider hypersurface embedding with $\IIo^{\rm e} = \mathcal{O}(\sigma^{d-3})$ but $\IIo^{\rm e} \neq \mathcal{O}(\sigma^{d-2})$.  Observe from Equation~\nn{dn-delt-IIoe} and Remark~\ref{dt-comm} that
$$\nabla_n^{d-2} n \csdot \IIo^{\rm e} \eqSig (d-2) \nabla^\top_{\rm e} \csdot \nabla_n^{d-3} \IIo^{\rm e} + \ltots{d-4}(\IIo^{\rm e})\,.$$
Hence because $\IIo^{\rm e} = \mathcal{O}(\sigma^{d-3})$, we have that
$$\nabla_n^{d-2} \nabla_{\rm e}^\top \csdot n \csdot \IIo^{\rm e} \eqSig (d-2) \nabla^\top \csdot \nabla^\top \csdot \nabla_n^{d-3} \IIo^{\rm e}\,,$$
and hence
$$\frac{d!}{2} B \eqSig - (d-2) \left(\nabla^{\top a} \nabla^{\top b} + P^{ab} \right) \nabla_n^{d-3} \IIo^{\rm e}_{ab} \,.$$

Using the Fialkow--Gau\ss\ equation of~\cite{Will1} and noting that the Fialkow tensor vanishes when $d \geq 5$ because $\IIo^{\rm e} = \mathcal{O}(\sigma^{d-3})$, it then follows that
$$\frac{d!}{2} B \eqSig - (d-2) \left(\bar{\nabla}^a \bar{\nabla}^b + \bar{P}^{ab} \right)(\otop \nabla_n^{d-3} \IIo^{\rm e}_{ab} )\,.$$
We can construct an explicit example to show that such a hypersurface embedding with $\IIo^{\rm e} = \mathcal{O}(\sigma^{d-3})$ but has $\mathcal{B}_d = 0$. To do so, let $U \subset M$ be a neighborhood of $\Sigma$ and work in a choice of scale $\gamma = [g;s]$ such that in a set of local coordinates $(s,y, x^2, \ldots, x^{d-1})$, we have that on $U$ the metric representative takes the form
$$g = ds^2 + 2 s^{d-3} f(y) ds \odot dy + dy^2 + dx^i dx_i\,,$$
for $i=1,\ldots,d-2$.
In this choice of scale, the hypersurface is flat and a tedious but straightforward calculation shows that $\IIo^{\rm e} = \mathcal{O}(s^{d-3})$ but $\IIo^{\rm e} \neq \mathcal{O}(s^{d-2})$, and further that $\mathcal{B}_d \propto  \partial_y^3 f(y) \neq 0$.

In the $d = 4$ case, we can also check using the same example. Using the same embedding described above, we have that $\IIo^{\rm e} = \mathcal{O}(s)$ but $\IIo^{\rm e} \neq \mathcal{O}(s^2)$. Furthermore, using the result for $\mathcal{B}_4$ in~\cite{hypersurface_old}, we can directly compute the Willmore invariant and find that
$$\mathcal{B}_4 = \tfrac{1}{18} \left[(\partial y f)^2 + \partial_y^3 f \right]\,.$$
And hence is nonzero for some function $f(y)$. This completes the proof.
\end{proof}


\subsection{Fundamental forms in even dimensions} \label{new-FFs-sec}
The proof of Theorem~\ref{ape-B} suggests that not only can the Willmore invariant be written in terms of some set of normal derivatives of $\IIo^{\rm e}$ but it specifies exactly how this can be done. Given the conditionality of the canonical $m$th fundamental form for $\frac{d+3}{2} \leq m \leq d-1$ described in the previous section, it is not clear that we can always characterize the Willmore invariant in terms of the fundamental forms. However, results on normal derivative operators from~\cite{GPt} suggest that when $d$ is even, for every $2 \leq m \leq d$, a normal derivative operator exists to produce an $m$th fundamental form unconditionally. However, an exceptional case arises in the construction of a $d$th fundamental form: even though the required normal derivative operator exists to naively produce such a tensor, there is an exceptional identity that forces the transverse order of the candidate tensor to be lower than expected. Note that their results only provide normal derivative operators acting on scalars, so we generalize their work in Theorem~\ref{delta-Jk} to construct normal derivative operators on general tractor tensors. 

First, we record a result directly from~\cite[Proposition 5.8]{GPt}:
\begin{lemma} \label{delta-k}
Let $k \in \mathbb{Z}_+$ be given. Then, there exists a family of conformally invariant operators $\delta_k : \Gamma(\ct^\Phi[w]) \rightarrow \Gamma(\ct^\Phi[w-k])|_{\Sigma}$ defined by
$$\delta_k := N^{A_2} \cdots N^{A_k} \delta_R D_{A_2} \cdots D_{A_k}\,,$$
with transverse order $k$ so long as
$$w \not \in \left\{\tfrac{2k - 1-d}{2}, \tfrac{2k-2-d}{2}, \dots, \tfrac{k+1-d}{2} \right\}\,.$$
Here $\delta_R : \Gamma(\ct^\Phi M[w]) \rightarrow \Gamma(\ct^\Phi M[w-1])|_{\Sigma}$ and is defined by $\delta_R := \nabla_{\hat n} - wH$.
\end{lemma}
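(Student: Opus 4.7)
The plan is to verify three independent properties: conformal invariance, the stated domain and codomain of $\delta_k$, and the transverse-order assertion. Conformal invariance follows essentially by construction: the Thomas-$D$ operator $D_A$ is conformally invariant on any weighted tractor bundle, the normal tractor $N^A$ restricted to $\Sigma$ is conformally invariant of weight zero, and the Robin operator $\delta_R = \nabla_{\hat n} - wH$ is the standard conformally invariant normal operator on weight-$w$ tractor-valued densities along $\Sigma$. Since $\delta_k$ is built only from compositions and index contractions of these ingredients, it inherits their invariance. The bundle mapping follows from weight tracking: each $D_A$ lowers weight by one, each $N^A$ contraction preserves weight, and $\delta_R$ lowers weight by one more, for a total loss of $k$.

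The substantive content is the transverse-order claim. For the upper bound, I would exploit the identity
\[
N_A D^A T \eqSig (d+2w'-2)\, \delta_R T
\]
on sections $T \in \Gamma(\ct^\Phi M[w'])$, which follows directly from \nn{D-trac} together with $N^A \eqSig (0, \hat n^a, -H)$: because the top slot $N^+$ vanishes, the second-order Laplacian piece in the bottom slot of $D^A$ is annihilated, leaving only the first-order middle slot plus a zero-order curvature correction. Thus each outer $N \cdot D$ contraction contributes at most one unit of normal transverse derivative, so together with the final $\delta_R$ the total transverse order of $\delta_k$ is at most $k$.

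The sharp claim that the order is exactly $k$ off the stated weight set is where the main difficulty lies. I would proceed by induction on $k$, showing that the leading $\nabla_{\hat n}^k$ term of $\delta_k T$ carries a coefficient built from the running Thomas-$D$ normalization factors $(d + 2w' - 2)$ evaluated at the successive weights $w' \in \{w,\, w-1,\, \ldots,\, w-(k-1)\}$, together with auxiliary factors appearing as the outer $D$ acts via the tractor connection on the tractor indices produced by inner $D$'s. These latter terms arise whenever an $N$-contraction meets an $X$-tractor component, at which point splitting-operator-type normalizations (reminiscent of those appearing in Lemma~\ref{r-operator}) enter; their vanishing produces further forbidden weights beyond the naive zero loci of $(d+2w'-2)$.

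The hardest step will be the careful combinatorial bookkeeping required to confirm that all such coefficient vanishings amalgamate to precisely the stated set $\{(2k-1-d)/2,\, (2k-2-d)/2,\, \ldots,\, (k+1-d)/2\}$, rather than some larger or differently indexed list. A practical route is to fix a scale adapted to a choice of defining density, compute $\delta_k$ explicitly in that splitting, and extract the leading $\nabla_{\hat n}^k$-coefficient as a polynomial in $w$ and $d$; the exclusion set then appears directly as the roots of this polynomial. The induction on $k$ isolates one new coefficient per step, keeping the combinatorics tractable and avoiding an expansion of all intermediate tractor-valued structures in one go.
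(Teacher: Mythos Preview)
The paper does not prove this lemma: it is introduced with ``First, we record a result directly from~\cite[Proposition 5.8]{GPt}'' and no argument is supplied. There is therefore no proof in the present paper to compare your attempt against; the original argument lives in the cited Gover--Peterson reference.

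On the merits of your sketch: the invariance and weight bookkeeping are routine and correct, and the identity $N_A D^A T \eqSig (d+2w'-2)\,\delta_R T$ is valid and does give the upper bound on transverse order. But be cautious on two points. First, the operator is $N^{A_2}\cdots N^{A_k}\,\delta_R\, D_{A_2}\cdots D_{A_k}$, not an iterate of $N\!\cdot\! D$: the $\delta_R$ and each outer $D$ act via the tractor connection on the indices $A_2,\ldots,A_k$ produced by the inner $D$'s, so one cannot simply peel off one $N\!\cdot\! D$ factor at a time. Second, your heuristic that the leading $\nabla_{\hat n}^k$ coefficient is essentially a product of the running factors $(d+2(w-j)-2)$, possibly augmented by further ``auxiliary'' zeros, does not match the stated exclusion set: the values $\{(2k-1-d)/2,\ldots,(k+1-d)/2\}$ are $k-1$ numbers spaced by $\tfrac12$, whereas the naive zeros of $(d+2(w-j)-2)$ for $j=0,\ldots,k-2$ are integer-spaced in $w$ and have different endpoints. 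So the actual leading-coefficient polynomial in $w$ is not the product you envisage, and nontrivial recombination among the tractor-connection cross terms must occur. Your proposed explicit-splitting computation would uncover this, but expect the combinatorics to be genuinely more delicate than your outline suggests. Finally, the appeal to Lemma~\ref{r-operator} is misplaced: that result concerns the operator $r$ on rank-two symmetric trace-free tractors and plays no role here.
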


\noindent
We now provide a generalization of~\cite[Theorem 5.16]{GPt}.
\begin{theorem} \label{delta-Jk}
Let $J,k \in \mathbb{Z}_+$ such that $0<J$, $0<k<d/2$ and let $d$ be even. Then, there exists a family of conformally invariant differential operators $\delta^\Phi_{J,k} : \Gamma(\ct^\Phi  M[w]) \rightarrow \Gamma(\ct^\Phi M [w-k-J])|_{\Sigma}$ determined as follows.
For $k \leq J$,
$$\delta_{J,k}^{\Phi} = N^{A_1} \cdots N^{A_k} \delta_J P_{A_1 \cdots A_k}^\Phi\,.$$
For $k > J$, then $\delta_{J,k}$ is determined by the equation
$$ (d+2w-2k) \delta^\Phi_{J,k} = N^{A_1} \cdots N^{A_k} \delta_J P_{A_1 \cdots A_k}^\Phi\,.$$
When $w = k-d/2$, $\delta_{J,k}^\Phi$ has transverse order $J+k$.
\end{theorem}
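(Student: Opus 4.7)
My strategy is to adapt the scalar-valued argument (of which the present result is the tensor generalization) to tensor-valued tractor fields. The additional complication over the scalar case is the appearance of $W$-tractor corrections in commutators of Thomas-$D$ operators (Equation \nn{W-trac}), which vanish trivially on scalars but in general contribute curvature terms that must be tracked.

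First I would verify conformal invariance of the raw expression
$$\Omega^\Phi_{J,k} := N^{A_1} \cdots N^{A_k} \delta_J P^\Phi_{A_1 \cdots A_k}\,.$$
The operator $\delta_J$ is conformally invariant by Lemma~\ref{delta-k} on the allowed weights, the iterated Thomas-$D$-type operator $P^\Phi_{A_1 \cdots A_k}$ is invariant by construction, and contraction against the conformally invariant normal tractor $N^A$ preserves invariance. This yields the claim for $k \leq J$ directly, and the transverse order equals $J+k$ by Lemma~\ref{delta-k} together with one normal-derivative contribution from each factor in $P^\Phi$ contracted against $N$.

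For $k > J$, I would compute $\Omega^\Phi_{J,k}$ by unwinding the nested Thomas-$D$ operators. Using the Leibniz failure and, crucially, the weight-dependent prefactor $(d+2w-2)$ appearing in the Thomas-$D$ definition (Equation \nn{D-trac}), each extra $\hd$ beyond those absorbed into $\delta_J$ contributes a factor proportional to the weight of the object it acts on. Careful bookkeeping, together with the $\hd$-commutator identity, should yield
$$\Omega^\Phi_{J,k} = (d+2w-2k)\,\delta^\Phi_{J,k} + (\text{terms of lower transverse order})\,,$$
which, for generic weight $w$, defines $\delta^\Phi_{J,k}$ uniquely, and the subleading terms collapse so that $\delta^\Phi_{J,k}$ extends smoothly to $w = k - d/2$.

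Finally, the transverse order statement at the critical weight $w = k - d/2$ follows because the prefactor $(d+2w-2k)$ vanishes there, so $\Omega^\Phi_{J,k}$ itself drops one full order of transverse derivative. Dividing by this vanishing scalar extracts the subleading term in the expansion, which carries one additional normal derivative; hence $\delta^\Phi_{J,k}$ at the critical weight has transverse order $J+k$ rather than the naively expected $J+k-1$. The main obstacle will be cleanly tracking the $W$-tractor corrections that arise from commuting Thomas-$D$'s past the $\Phi$-type tensor indices, verifying that these corrections are strictly lower in transverse order and do not interfere with either the coefficient $(d+2w-2k)$ or the extraction of the top-order operator at the critical weight; this is the step that genuinely exceeds the scalar case.
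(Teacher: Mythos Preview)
Your outline diverges substantially from the paper's route, and the divergence is exactly where the real content lies. The paper does \emph{not} attempt to extract the factor $(d+2w-2k)$ by direct tractor manipulations (Leibniz failure, $W$-commutators, weight bookkeeping). Instead it lifts to the Fefferman--Graham ambient space and proves a structural result (Proposition~\ref{Box2k}, generalizing \cite[Proposition~4.5]{GOpet}): for $k<d/2$ one has
\[
X_{A_1}\cdots X_{A_{k-1}} P_{2k}^\Phi T = (-1)^{k-1}\,\square D_{A_1}\cdots D_{A_{k-1}} T + \Psi_{A_1\cdots A_{k-1}}{}^C D_C T,
\]
with $\Psi$ a curvature correction. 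This is used to \emph{define} the curvature-corrected operator $P^\Phi_{A_1\cdots A_k}$ (Proposition~\ref{514-general}) and to show that at the critical weight it collapses to $(-1)^k X_{A_1}\cdots X_{A_k} P^\Phi_{2k}$. That collapse to a product of $X$'s is precisely the mechanism feeding into \cite[Lemmas~5.7,~5.15]{GPt} that forces the overall factor $(d+2w-2k)$ to appear in $N^{A_1}\cdots N^{A_k}\delta_J P^\Phi_{A_1\cdots A_k}$. The ambient computation is what makes this tractable: one commutes ambient Laplacians and connections via Equation~\nn{commLapNabla}, controls the curvature remainders via Lemma~\ref{lemma44}, and only then descends to tractors. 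Your proposal to obtain the same factor purely from ``each extra $\hd$ \dots\ contributes a factor proportional to the weight'' is not a proof; the iterated $D$'s do not simply telescope, and the curvature correction $X_{A_1}\mathcal{P}^{\Phi,k}_{A_2\cdots A_k}$ built into $P^\Phi$ is essential for the critical-weight collapse and is never addressed in your sketch.

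Your transverse-order argument is also garbled. Dividing an operator by a scalar cannot raise its transverse order, so the sentence ``dividing by this vanishing scalar extracts the subleading term \dots\ which carries one additional normal derivative'' is incoherent. The correct picture is that $\Omega^\Phi_{J,k}$, viewed as a rational family in $w$, factors as $(d+2w-2k)$ times an operator that is regular in $w$ and has transverse order $J+k$ throughout; at $w=k-d/2$ the quotient $\delta^\Phi_{J,k}$ is simply the value of this regular family, and the nontrivial content is verifying that the leading transverse-order symbol of the quotient does not itself vanish there. In the paper this is inherited from the scalar argument of \cite[Theorem~5.16]{GPt} once Propositions~\ref{Box2k}--\ref{514-general} are in place; your sketch supplies no replacement for that step.
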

\begin{proof}
We follow the  proof of~\cite[Theorem 5.16]{GPt}. That proof relies on several results that require generalization to arbitrary tractor tensor structures; specifically, it directly requires~\cite[Lemma 5.7, Proposition 5.14, and Lemma 5.15]{GPt}. Of these results,~\cite[Lemmas 5.7 and 5.15]{GPt} already apply to arbitrary tractors, whereas~\cite[Proposition 5.14]{GPt} only applies to scalars and directly relies on~\cite[Proposition 5.10]{GPt}, which also only applies to scalars.~\cite[Proposition 5.10]{GPt}, in turn, relies on~\cite[Proposition 4.5]{GOpet}, which is a result on scalars. Thus, the proof this theorem effectively amounts to generalizing~\cite[Proposition 4.5]{GOpet} to arbitrary tensor structures and then following this result through the aforementioned steps. The generalization of~\cite[Proposition 4.5]{GPt} is given in Proposition~\ref{Box2k}, and the generalizations of~\cite[Propositions 5.10, 5.14]{GPt} are given in Propositions~\ref{P-indices} and~\ref{514-general}, respectively. See Section~\ref{proofs} for these results.
\end{proof}

Such an operator $\delta^\Phi_{J,k}$ is defined on any particular tractor bundle $\ct^\Phi M[w]$, so we will drop the superscript $\Phi$ when the tensor structure is clear from context. A simple consequence of this theorem is the following alternative construction of canonical fundamental forms in even dimensions:
\begin{definition} \label{higher-FFs}
Let $d$ be even and $m$ be an integer satisfying $3 \leq m \leq d-1$. Then the canonical fundamental form $\FF{m}$ is defined by
\begin{align*}
\FF{m}_{ab} :=
\begin{cases} 
     \big(\bar{q}^* \circ \bar{r} \circ \otop \circ \delta_{m-2} \big)(P_{AB}) & m \leq \tfrac{d+2}{2}\\[10pt]
     \big(\bar{q}^* \circ \bar{r} \circ \otop \circ \delta_{\scalebox{0.7}{$m-\tfrac{d+2}{2},\tfrac{d-2}{2}$}} \big)(P_{AB}) & m > \tfrac{d+2}{2}\,.
   \end{cases}
\end{align*}
\end{definition}

We need to check that this definition is correct, given that an $m$th fundamental form is defined to be a symmetric trace-free rank-2 tensor with weight $3-m$ and transverse order $m-1$.
\begin{proposition} \label{higher-dn}
Let $d$ be even. Then for each $m \in \mathbb{Z}_{\geq 2}$ with $m \leq d-1$, the $m$th canonical fundamental form is a fundamental form.
\end{proposition}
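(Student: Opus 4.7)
The plan is to verify that the $m$th canonical fundamental form $\FF{m}$ carries the three defining properties of a fundamental form---a symmetric trace-free rank-$2$ tensor density on $\Sigma$ of weight $3-m$ and transverse order $m-1$. For $m=2$, $\FF{2}=\IIo$ satisfies these properties immediately from its definition in Section~\ref{cff-section}. For $3\leq m\leq d-1$, I would apply Definition~\ref{higher-FFs} and track the weight, tensor type, and transverse order through the composition of operators applied to $P_{AB}$. The starting point is $P_{AB}=\hd_A\hd_B\sigma\in\Gamma(\odot^2_\circ \ct M[-1])$: Equation~\nn{Ptrac} shows it is symmetric with vanishing top slot, trace-free middle slot $\IIo^{\rm e}$, weight $-1$, and transverse order $1$.

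First I would track the conformal weight. In the regime $m\leq (d+2)/2$, Lemma~\ref{delta-k} yields $\delta_{m-2}\colon\Gamma(\odot^2_\circ\ct M[-1])\to\Gamma(\odot^2_\circ\ct M[1-m])|_\Sigma$, while in the regime $m>(d+2)/2$, Theorem~\ref{delta-Jk} with $J=m-(d+2)/2$ and $k=(d-2)/2$ produces the weight shift $-(J+k)=-(m-2)$, so $\delta_{J,k}(P_{AB})$ again has weight $1-m$. The operators $\otop$ and $\bar r$ preserve weight, and $\bar q^*$ shifts it by $+2$ on extraction, delivering the required weight $3-m$. For the transverse order, $\delta_{m-2}$ has transverse order $m-2$ by Lemma~\ref{delta-k}, while $\delta_{J,k}$ has transverse order $J+k=m-2$ by Theorem~\ref{delta-Jk} exactly at $w=k-d/2=-1$, which is precisely the weight of $P_{AB}$. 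Composed with the transverse order $1$ of $P_{AB}$, this yields transverse order $m-1$ along $\Sigma$, and the remaining operators $\otop$, $\bar r$, $\bar q^*$ are either algebraic or involve $\hd$ in ways that do not lower the leading transverse-derivative term.

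Finally I would verify the well-definedness of the intermediate operators and the persistence of the symmetric trace-free rank-$2$ structure. The excluded weights for $\delta_{m-2}$ in Lemma~\ref{delta-k} are all half-integers when $d$ is even, so the integer $w=-1$ is admissible throughout the first regime. For $\bar r$, substituting $d\mapsto d-1$ in Lemma~\ref{r-operator} gives the exclusion set $\{0,-1\}$ together with three half-integers, all avoided by $1-m$ for integer $m\geq 3$. The $\delta$-operators preserve the symmetric tractor structure by construction, and $\otop$ projects out any trace parts that may be introduced. The main subtlety is securing the transverse order in the exceptional regime $m>(d+2)/2$, where the naive derivative count could fail; this rests entirely on the weight-specific transverse-order assertion of Theorem~\ref{delta-Jk}, and once that is in hand the proposition follows by direct assembly.
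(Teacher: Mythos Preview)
Your structural outline---checking well-definedness, weight, tensor type, and transverse order---matches the paper's proof, and your handling of the weight and the excluded-weight sets for $\delta_{m-2}$ and $\bar r$ is correct. The gap is in the transverse-order step. Saying that $\delta_{m-2}$ (or $\delta_{J,k}$) has transverse order $m-2$ and that $P_{AB}$ has transverse order $1$ gives only an \emph{upper} bound of $m-1$ for the composite; it does not rule out cancellation of the leading term. Likewise, the assertion that $\otop$, $\bar r$, $\bar q^*$ ``do not lower the leading transverse-derivative term'' is exactly what must be proved, not assumed. The paper addresses both points: first, by working under the hypothesis $\IIo^{\rm e}=\mathcal{O}(\sigma^{m-2})$ and using Lemma~\ref{nIIoe} together with the slot structure of the tractor connection to show that the projecting part of $\nabla_n^{m-2}P_{AB}$ is $\nabla_n^{m-2}\IIo^{\rm e}$, whence for arbitrary $\IIo^{\rm e}$ one has
\[
(\bar q^*\circ\bar r\circ\otop)(\nabla_n^{m-2}P_{AB})=\alpha\,\otop(\nabla_n^{m-2}\IIo^{\rm e})+\ltots{m-3}(\IIo^{\rm e})
\]
for some nonzero $\alpha$; and second, by invoking the explicit identity from \cite[Proof of Corollary 4.9]{BGW},
\[
\otop(\nabla_n^{m-2}\IIo^{\rm e})=\tfrac{d-m}{2(d-2)}\,\otop(\partial_s^{m-1}g_{ab}),
\]
whose coefficient $d-m$ is nonzero precisely when $m\leq d-1$. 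This last computation is the crux: it is where the hypothesis $m\leq d-1$ actually enters, and it is why the naive construction of a $d$th fundamental form fails (as the paper discusses immediately after the proof). Without it, your argument does not distinguish $m=d-1$ from $m=d$, so you have not yet established that the transverse order is \emph{exactly} $m-1$.
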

\begin{proof}
First we must check that $\FF{m}$ is well-defined. Note that the operator $\bar{r}$ is only ill-defined when $w \in \{0,-1,-\frac{d-1}{2}, -1-\frac{d-1}{2}, -2-\frac{d-1}{2}\}$. Because $\frac{d-1}{2}$ is not an integer and the weights of $\delta_k (P_{AB})$ and $\delta_{J,k}(P_{AB})$ are integers (because the weight of $P_{AB}$ is $-1$), it is clear that application of $\bar{r}$ is permitted  in Definition~\ref{higher-FFs}. 

We next check that $\FF{m}$ has weight $3-m$ as required. Observe that, because $\Im \bar{r} \subset \ker X$, for generic $T \in \Gamma(\odot^2_{\circ} \ct \Sigma [w])$, we have that $(\bar{q}^* \circ \bar{r})(T_{AB}) \in \Gamma(\odot^2_\circ T^* \Sigma[w+2])$. So, by construction, $\FF{m}$ has weight $3-m$.

Finally, we must verify that $\FF{m}$ has transverse order $m-1$. Because the weight of $P_{AB}$ is $-1$, for $m \leq \frac{d+2}{2}$, we can easily check (for even $d$) that $\delta_{m-2}$ has transverse order $m-2$ as per Lemma~\ref{delta-k}. Similarly, we can verify from Theorem~\ref{delta-Jk} that the transverse order of $\delta_{\scalebox{0.7}{$m-\frac{d+2}{2},\tfrac{d-2}{2}$}}$ is $m-2$.

Now we need only verify that the remainder of the definition of $\FF{m}$ does not change the transverse order. To check this, suppose that $\IIo^{\rm e} = \mathcal{O}(\sigma^{m-2})$. Then, because $m-2 \leq d-2$, from Lemma~\ref{nIIoe} and Equation~\nn{delt-dn-IIoe}, we have that $\nabla \csdot \IIo^{\rm e} = \mathcal{O}(\sigma^{m-2})$. However, observe heuristically from Equation~\nn{trac-conn} that the action of the tractor connection on a lower slot either moves the slot ``up'' or applies a derivative. Indeed, for $\IIo^{\rm e} = \mathcal{O}(\sigma^{m-2})$, one can verify by direct computation and Lemma~\ref{nIIoe} that the projecting part of $\nabla_n^{m-2} P_{AB}$ is indeed $\nabla_n^{m-2} \IIo^{\rm e}$. Therefore, for arbitrary $\IIo^{\rm e}$, all of the terms correcting $(\bar{q}^* \circ \bar{r} \circ \otop)(\nabla_n^{m-2} P_{AB})$ must be of lower transverse order than $\nabla_n^{m-2} \IIo^{\rm e}$, \textit{i.e.}
$$(\bar{q}^* \circ \bar{r} \circ \otop) (\nabla_n^{m-2} P_{AB})  = \alpha \otop (\nabla_n^{m-2} \IIo^{\rm e}_{ab}) + \ltots{m-3}(\IIo^{\rm e})\,,$$
for some non-zero $\alpha$.
From~\cite[Proof of Corollary 4.9]{BGW}, we have that 
$$\otop(\nabla_n^{m-2} \IIo^{\rm e}) = \tfrac{d-m}{2(d-2)} \otop (\partial_s^{m-1} g_{ab})\,,$$
and hence has transverse order $m-1$ for $m\leq d-1$. This completes the proof.
\end{proof}

\begin{remark}
This theorem proves the implicit conjecture in~\cite{BGW} that, for $d$ even, there exists an $n$th conformal fundamental form for every integer $2 \leq n \leq d-1$. Furthermore, fundamental forms $\FF{m}$ with $m \geq d+1$ can also be defined using the above result (as well as an extension of Lemma~\ref{nIIoe}). In particular, by uniqueness of the fundamental forms, the tracefree hypersurface projection of the Fefferman-Graham obstruction tensor~\cite{FG,FGbook} of $(M^d,\cc)$  is $\FF{d+1}$. As a first example, the Fefferman-Graham obstruction tensor in four dimensions is the Bach tensor whereas according to~\cite{BGW}, in four dimensions we have that $\Vo = B_{(ab)\circ}^\top$.
\end{remark}

Because such fundamental forms exist, going forward we will represent by $\FF{m}$ a normalized conformal fundamental form such that it has identical structure to the $m$th canonical fundamental form defined above but its leading derivative structure has a coefficient of $1$, so that $\FF{m} = \nabla_n^{m-2} \IIo^{\rm e} + \ltots{m-3}(\IIo^{\rm e})$.

Also observe that even though the operator $\delta_{\scalebox{0.7}{$\tfrac{d-2}{2},\tfrac{d-2}{2}$}}$ has transverse order $d-2$, the calculation in the above proof shows that
\begin{align} \label{dth-FF}
\nabla_n^{d-2} \IIo^{\rm e}_{ab} \eqSig \ltots{d-2}(g_{ab})\,,
\end{align}
which shows that the implied construction of the $d$th fundamental form above is not tenable. An alternative construction, using a conformally-invariant extension of $\IIIo$ given by
$$\IIIo^{\rm e}_{ab} := W_{nabn} + 2 \sigma C_{n(ab)} - \tfrac{\sigma^2}{d-4} B_{ab}\,,$$
yields the same phenomenon when attemping to construct the $d$th fundamental form for $d \geq 6$ even. Indeed, in a choice of scale where $|ds|^2 = 1$ and a choice of coordinates $(s,y^1, \ldots, y^{d-1})$, we have that
$$\otop \circ \nabla_n^{m} \IIIo^{\rm e}_{ab} = \tfrac{(d-m-3)(d-m-4)}{2(d-2)(d-4)} \partial_s^{m+2} g_{ab} + \ltots{m+1}(g_{ab})\,,$$
so that for $d \geq 6$, 
$$\nabla^{d-3}_n \IIIo^{\rm e}_{ab} = \ltots{d-2}(g_{ab})\,.$$
However, in $d =4$, the fourth fundamental form $\IVo$ is constructible because the leading derivative has a removable singularity at $d=4$, and thus we can define
$$\IVo := (\bar{q}^* \circ \bar{r} \circ \otop \circ \delta_R \circ q)(\IIIo^{\rm e})\,.$$

\begin{remark}
The above alternative construction of fundamental forms (using an extension of $\IIIo$) suggests that the higher conformal fundamental forms only exist on conformal manifolds that are not conformally flat.
\end{remark}

For generally curved conformal manifolds $(M,\cc)$ in even dimensions, the cancellation of the leading derivative term on $g_{ab}$ suggests that there exists no conformal fundamental form $\FF{d}$ so long as $\IIo \neq 0, \ldots, \FF{d-1} \neq 0$. At present we have no evidence for or against the existence of $d$th fundamental forms and leave this as an open question.

\medskip

A consequence of the proof of Theorem~\ref{ape-B} follows from Definition~\ref{higher-FFs} in even dimensions.
\begin{corollary} \label{B-makeup}
Let $(M^d, \gamma,\sigma)_{\mathcal{Y}}$ specify a conformally embedded hypersurface with $d \geq 4$ even. Then, the Willmore invariant $\mathcal{B}_d$ has transverse order $d-1$ and can be expressed in such a way that each term contains at least one element of the set $\{\IIo, \ldots,\FF{d-1}, \otop \nabla_n^{d-3} P\}$ with a non-zero coefficient and $\mathcal{B}_d$ can be written to explicitly depend polynomially on each element of the set.
\end{corollary}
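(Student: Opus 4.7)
The plan is to start from the explicit formula derived in the proof of Theorem~\ref{ape-B},
$$\tfrac{d!}{2}\mathcal{B}_d \eqSig \nabla_n^{d-2}\Big[K_{\rm e} - n\csdot (\IIo^{\rm e})^2 \csdot n - \rho\, n\csdot \IIo^{\rm e}\csdot n - \nabla^\top_{\rm e}\csdot n\csdot \IIo^{\rm e} + s\,\IIo^{\rm e}_{ab}\Lambda^{ab}\Big],$$
where $\Lambda^{ab}:=n^a n^c P_c^b - 2n^a\nabla^b\rho - P^{ab}$. Distributing $\nabla_n^{d-2}$ by Leibniz, every resulting summand carries at least one factor of the form $\nabla_n^k\IIo^{\rm e}$ with $0 \le k \le d-2$; the remaining derivatives act on the ``background'' pieces $\hat n$, $\rho$, $P$, and $s$. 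For every factor with $k \le d-3$, the identity $\FF{k+2}=\nabla_n^k\IIo^{\rm e}+\ltots{k-1}(\IIo^{\rm e})$ extracted from the proof of Proposition~\ref{higher-dn} rewrites it as $\FF{k+2}$ plus a polynomial in $\FF{2},\ldots,\FF{k+1}$, so each such summand manifestly contains an element of $\{\IIo,\ldots,\FF{d-1}\}$.

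The delicate case is any remaining occurrence of the factor $\nabla_n^{d-2}\IIo^{\rm e}$, whose trace-free tangential projection is degenerate by~\nn{dth-FF} and therefore contributes to transverse order at most $d-2$ on its own. I would track where the missing transverse-order-$(d-1)$ content re-emerges by commuting the outer normal derivatives past the tangential derivatives and normal contractions inside the bracket using Lemma~\ref{nIIoe}, Remark~\ref{dt-comm}, and $[\nabla_n,\nabla^\top_{{\rm e}\,a}]=2\rho n_a\nabla_n+\ltots{0}$. This reorganizes every such occurrence into tangential divergences of lower-order fundamental forms plus an isolated residual coming from the last bracket: after one normal derivative is spent on $s$, the remaining $d-3$ acting on the $P^{ab}$-piece of $\Lambda^{ab}$ collapse, after trace-free tangential projection and absorption of the already-rewritten factors, to the single tensor $\otop\nabla_n^{d-3}P$. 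The nonvanishing of the coefficient of $\otop\nabla_n^{d-3}P$, and hence the fact that $\mathcal{B}_d$ has transverse order exactly $d-1$, will be verified by perturbing the warped-metric example at the end of the proof of Theorem~\ref{ape-B}: the conformally flat version there yields $\mathcal{B}_d\propto \partial_y^3 f$ with no Schouten contribution, whereas switching on a nontrivial $P$ produces a transverse-order-$(d-1)$ piece that cannot cancel against the other elements of the set, all of which have transverse order at most $d-2$.

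The main obstacle will be the combinatorics of the Leibniz expansion of the last bracket $s\,\IIo^{\rm e}_{ab}\Lambda^{ab}$: for a summand to survive restriction to $\Sigma$, exactly one of the $d-2$ normal derivatives must annihilate the $s$, the other $d-3$ must be distributed between $\IIo^{\rm e}_{ab}$ and $\Lambda^{ab}$, and the tangential and normal derivatives buried in $\Lambda^{ab}$ must be commuted past $\nabla_n$ consistently with the iterative definitions of the $\FF{m}$. Once this bookkeeping is carried out, every summand is, by construction, a finite product of elements of $\{\IIo,\ldots,\FF{d-1},\otop\nabla_n^{d-3}P\}$ with coefficients polynomial in ambient curvature scalars and $\hat n$, which yields both the polynomial dependence on each listed tensor and the claim that the coefficient of the listed factor in each term is nonzero.
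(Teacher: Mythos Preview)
Your outline tracks the paper's argument for the first two claims (every term contains some element of the set, and the transverse order is $d-1$), though your plan to certify transverse order $d-1$ via the warped example is more roundabout than the paper's direct observation that $\nabla_n^{d-2}(s\,\IIo^{\rm e}\csdot P)$ is the sole term of that order.

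The real gap is the last clause of the corollary: that $\mathcal{B}_d$ can be written so that \emph{every} element of $\{\IIo,\ldots,\FF{d-1},\otop\nabla_n^{d-3}P\}$ appears with a non-zero coefficient. Your final paragraph asserts that once the Leibniz bookkeeping is done, ``polynomial dependence on each listed tensor'' follows, but nothing in your argument excludes cancellation. When you replace each $\nabla_n^k\IIo^{\rm e}$ by $\FF{k+2}+\ltots{k-1}(\IIo^{\rm e})$, the lower-order corrections feed back into the coefficients of the smaller $\FF{m}$'s; meanwhile the $K_{\rm e}$ expansion and the $s\,\IIo^{\rm e}\csdot P$ expansion both produce bilinears $\FF{k+2}\csdot\FF{d-k}$, and a priori these could conspire to kill some intermediate $\FF{m}$ entirely. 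Showing that they do not requires actually computing coefficients, which you never do. The paper handles this by passing to the gauge $|ds|_g=1$, writing $\nabla_n^m\IIo^{\rm e}=\tfrac{d-m-2}{2(d-2)}\partial_s^{m+1}g+\ltots{}$ and $\nabla_n^mP=-\tfrac{1}{2(d-2)}\partial_s^{m+2}g+\ltots{}$, combining the two Leibniz sums, and checking that the resulting coefficient of each $(\partial_s^{k+1}g)\csdot(\partial_s^{d-k-1}g)$ is non-zero (they are $\tfrac14$, $\tfrac{(d-1)(d-3)}{4(d-2)}$, and $\tfrac14\binom{d-3}{k}$ respectively). Without an analogous calculation you cannot conclude that each $\FF{m}$ survives.

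A secondary point: your claim that ``every summand is a finite product of elements of the set'' is too strong. Terms such as $(\bar\nabla^a\bar\nabla^b+\bar P^{ab})\FF{d-1}_{ab}$ necessarily involve intrinsic operators and curvatures as additional factors; the corollary only asserts that each summand contains at least one element of the set, not that it is built solely from them.
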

\begin{proof}

We first check that Corollary~\ref{B-makeup} holds for $d = 4$.  From an explicit formula in~\cite{hypersurface_old}, we see that $\mathcal{B}_4$ contains exactly one term that contains $C_{\hat{n} ab}^\top$, which has transverse order $3$. Further we note that $\mathcal{B}_4$ can be written in a way such that each term contains at least one of $\IIo, \IIIo, C_{\hat{n} ab}^\top$. However note that $C_{\hat{n} ab}^\top = \otop \nabla_n P_{ab} + \ltots{0}(P_{ab})$, so the corollary holds.

Now suppose that $d \geq 6$. We check that the transverse order of $\mathcal{B}_d$ is $d-1$ first. From the proof of Theorem~\ref{ape-B}, the terms with leading normal derivatives on $\IIo^{\rm e}$ are $\nabla_n^{d-2} (K_{\rm e} - s \IIo^{\rm e} \csdot P)$. Observe that $\nabla_n^{d-2} K = \IIo \csdot \nabla_n^{d-2} \IIo^{\rm e} + \ltots{d-3}(\IIo^{\rm e})$. However, from Equation~\nn{dth-FF} we see that the leading transverse order term of this expression vanishes. So we need only consider $\nabla_n^{d-2} (s \IIo^{\rm e} \csdot P)$. A direct calculation shows that this expression has transverse order $d-1$. But because this is the only term in $\mathcal{B}_d$ with that transverse order, we have that $\mathcal{B}_d$ has transverse order $d-1$. 

From the proof of Theorem~\ref{ape-B}, it is clear that each term can be expressed in terms of some operator acting on $\IIo^{\rm e}$ and hence from Proposition~\ref{higher-dn} and Lemma~\ref{nIIoe} can be expressed in terms of a canonical fundamental form---with the exception of terms with transverse order $d-1$. In particular, for any integer $\ell$ satisfying $0 \leq \ell \leq d-3$, we have that
\begin{equation} \label{leading-dn}
\nabla_n^{\ell} \IIo^{\rm e} \eqSig \FF{\ell+2} + \ltots{\ell-1}(\IIo^{\rm e})\,.
\end{equation}
As noted above, the term with transverse order $d-1$ only arises in the product $\IIo \csdot \nabla_n^{d-3} P$, and hence we can characterize that term by $\otop \nabla_n^{d-3} P$. Thus, we have that each summand in $\mathcal{B}_d$ contains at least one element of the set $\{\IIo, \ldots, \FF{d-1}, \otop \nabla_n^{d-3} P\}$.

\medskip

It now remains to show that $\mathcal{B}_d$ can be written in such a way that it explicitly depends on each element of the set $\{\IIo, \ldots, \FF{d-1}, \otop \nabla_n^{d-3} P\}$ with a non-zero coefficient.
A key observation is that only the terms in $\mathcal{B}_d$ of the form $\nabla_n^{d-2} (K_{\rm e} - s \IIo^{\rm e} \csdot P)$ can contain terms that are a product of exactly two elements from this set. One way to see this is that $n \csdot \nabla_n \IIo^{\rm e}$ can be written in terms of $\nabla^\top_{\rm e} \csdot \IIo^{\rm e}$, and that derivative cannot be eliminated by any manipulations. So we now consider an expansion of this difference:
\begin{align} \label{FF-squared}
\begin{split}
\nabla_n^{d-2} (K_{\rm e} - s \IIo^{\rm e} \csdot P) \eqSig \sum\limits_{k= 0}^{d-2} \tbinom{d-2}{k} (\nabla_n^k \IIo^{\rm e}) &\csdot (\nabla_n^{d-k-2} \IIo^{\rm e})  \\&-(d-2) \sum\limits_{k = 0}^{d-3} \tbinom{d-3}{k} (\nabla_n^{k} \IIo^{\rm e}) \csdot (\nabla_n^{d-k-3} P) + \text{more},
\end{split}
\end{align}
where $\text{more}$ contains no terms quadratic in elements of the set $\{\nabla_n^m \IIo^{\rm e}, \nabla_n^{\ell} P\}_{m, \ell}$ for $0\leq m \leq d-2$ and $0\leq \ell \leq d-3$. Before we proceed, note that the term of the form $P \csdot \nabla_n^{d-3} \IIo^{\rm e}$ in the display above can be paired with $\nabla^\top \csdot \nabla^\top \csdot \nabla_n^{d-3} \IIo^{\rm e}$ (resulting from manipulating the term $\nabla_n^{d-2} \nabla^\top_{\rm e} \csdot n \csdot \IIo^{\rm e}$) to form a conformal invariant. In particular, observe that the operator $L$ defined in~\cite{hypersurface_old} by
$$\Gamma(\odot^2_\circ T^* \Sigma[-d+4]) \ni T_{ab} \mapsto (\bar \nabla^a \bar \nabla^b + \bar{P}^{ab}) T_{ab} \in \Gamma(\ce \Sigma[-d])$$
is conformally invariant on weight $4-d$ densities, which is precisely the weight of $\FF{d-1}$. But the leading term of $\nabla_n^{d-3} \IIo^{\rm e}$ is indeed $\FF{d-1}$ as per Equation~\nn{leading-dn}, so the leading term of
$$(\nabla^{\top a}\nabla^{\top b} + P^{ab}) \nabla_n^{d-3} \IIo^{\rm e}_{ab}$$
 is precisely $L(\FF{d-1})$. Thus, $\mathcal{B}_d$ contains a term of the form $L(\FF{d-1})$ which absorbs the term of the form $P \csdot \nabla_n^{d-3} \IIo^{\rm e}$. Thus, from Equation~\nn{FF-squared}, we are only interested in the terms
\begin{align} \label{FF-squared2}
\sum\limits_{k= 0}^{d-2} \tbinom{d-2}{k} (\nabla_n^k \IIo^{\rm e}) &\csdot (\nabla_n^{d-k-2} \IIo^{\rm e})  -(d-2) \sum\limits_{k = 0}^{d-4} \tbinom{d-3}{k} (\nabla_n^{k} \IIo^{\rm e}) \csdot (\nabla_n^{d-k-3} P).
\end{align}

We now work in the scale where $|ds|_g = 1$ and choose a set of coordinates $(s,y^1, \ldots, y^{d-1})$, so that
\begin{align*}
\nabla_n^{m} \IIo^{\rm e} =& \; \tfrac{d-m-2}{2(d-2)} \partial_s^{m+1} g_{ab} + \ltots{m-2}(g)_{ab} \\
 \nabla_n^{m}P_{ab} =& \; -\tfrac{1}{2(d-2)} \partial_s^{m+2} g_{ab} + \ltots{m+1}(g)_{ab}\,.
\end{align*}
Applying the above display to Display~\nn{FF-squared2} and keeping only terms quadratic in $\partial_s^m g$, we have
\begin{align*}
\tfrac{1}{4} (\partial_s g) \csdot ( \partial_s^{d-1} g) + \tfrac{(d-1)(d-3)}{4(d-2)} (\partial_s^2 g) \csdot ( \partial_s^{d-2} g) + \tfrac{1}{4} \sum_{k=2}^{d-4} \tbinom{d-3}{k} ( \partial_s^{k+1} g) \circ (\partial_s^{d-k-1} g)\,.
\end{align*}
The definition of a fundamental form is a conformally-invariant rank-$2$ tensor of the appropriate weight and transverse order. Because $\mathcal{B}$ is conformally invariant, a consequence of Equation~\nn{leading-dn} is that we must be able to express the above display in terms of quadratic products of fundamental forms of the form $\FF{k+2} \csdot \FF{d-k}$ plus subleading terms---with the exception of the term with transverse order $d-1$. But because none of the coefficients vanish for $d \geq 4$ in the above display, the corresponding coefficients for $\FF{k+2} \csdot \FF{d-k}$ must also not vanish for each product for $1 \leq k \leq d-3$. For the same reason as the terms quadratic in fundamental forms, the one remaining term, of the form $\tfrac{1}{4} (\partial_s g) \csdot (\partial_s^{d-1} g)$ can be written in the form $\IIo \csdot \nabla_n^{d-3} P$ plus subleading terms with a non-zero coefficient.
%
%
%
%
Thus, $\mathcal{B}_d$ can be written so that it explicitly depends on every element of the set  $\{\IIo, \ldots,\FF{d-1}, \otop \nabla_n^{d-3} P\}$ with a non-zero coefficient. This completes the proof.
\end{proof}

\begin{remark}
Much of the argument in the proof of Corollary~\ref{B-makeup} follows when $d \geq 5$ is odd. In that case, for $n \geq \frac{d+3}{2}$, the fundamental forms are only conformally-invariant when $\FF{m} = 0$ for each $2 \leq m < \frac{d+3}{2}$. Nonetheless, the higher transverse derivative terms $\nabla_n^{k} \IIo^{\rm e}$ still appear in $\mathcal{B}_d$, so a similar statement as the above corollary would hold in terms of what may be called \textit{pre-fundamental forms}---tensors that are not conformally-invariant but become conformally-invariant when $\FF{m} = 0$ for each $m$ in the range $2 \leq m < \frac{d+3}{2}$.
%
\end{remark}

\section{Proof of Theorem~\ref{delta-Jk}} \label{proofs}

Our goal in this section is to prove the necessary results to generalize~\cite[Theorem 5.16]{GPt} and hence prove Theorem~\ref{delta-Jk}. Most of the required proofs go through without significant changes to the original scalar proofs, so we summarize the work of~\cite{GOpet,GPt} and provide details only when necessary.

To prove Theorem~\ref{delta-Jk}, we rely on a construction known as the \textit{ambient construction}, first introduced by Fefferman and Graham in~\cite{FG,FGbook}. We now briefly summarize the key ingredients of the ambient construction. For more detail, see~\cite{GW}.

Given a conformal manifold $(M^d,\cc)$, the conformal class of metrics can be viewed as a ray subbundle $\pi : \mathcal{Q} \rightarrow M$ of metrics on $M$. The natural $\mathbb{R}_+$ action, denoted by $\rho$ and parametrized by $t$, on an element of this ray subbundle then is $\rho_t(x,g_x) = (x,t^2 g_x)$. This subbundle carries with it a tautological symmetric $2$-tensor $g_0 := \pi^*g$ at some point $(x,g_x) \in \mathcal{Q}$ which obeys $\rho^*_t (g_0) = t^2 g_0$. The associated \textit{ambient manifold} then is a $d+2$-dimensional manifold $\aM$ (with signature $(d+1,1)$) given by $\mathcal{Q} \times (-1,1)$ with the $\mathbb{R}_+$ action on $\mathcal{Q}$ naturally extended to $\aM$. We denote by $\X$ the infinitesimal generator of the $\mathbb{R}_+$ action and we will denote by $Q$ the defining function for $\mathcal{Q}$.

The metric $\h$ on $\aM$ is defined such that $\h$ pulls back to $g_0$ on  $\mathcal{Q}$ and has the same homogeneity property, \textit{i.e.} $\rho^*_s \h= s^2 \h$. Further, we demand that  the corresponding Ricci curvature $\aRic$ vanishes to the maximal possible order. In particular, because we are only interested in the even dimensional case, a result of~\cite{FG} implies that we can at most uniquely determine $\h$ to order $\mathcal{O}(Q^{d/2})$. Note that this fixes $\aR$ (the ambient Riemann curvature) and $\aRic$ uniquely to order $\mathcal{O}(Q^{d/2-2})$.

Given this construction, there exists a injective relationship between certain operators and tensors on in $\aM|_{\mathcal Q}$ and tractors. It is this relationship that underlies the proofs given in~\cite{GOpet} which are required here. For a homogeneous weight $w$ tensor field $T$ of type $\Phi$ on $\aM|_{\mathcal{Q}}$, we will denote the section space by $\Gamma(\ct^\Phi \aM(w))|_{\mathcal Q}$, using similar notation as that used for tractors due to the injective relationship just described.  In particular, the ambient operator given by
$$\aD_A := \boldnabla_A (d+2 \boldnabla_{\X} - 2) + \X_A \al$$ maps to the Thomas-$D$ operator and
$$\al|_{\mathcal{Q}} : \Gamma(\ct^\Phi \aM(1-d/2))|_{\mathcal Q} \rightarrow \Gamma(\ct^\Phi \aM(-1-d/2))|_{\mathcal Q}$$
 maps to the \textit{Yamabe operator} $\square$, defined according to $D_A = -X_A \square$ when acting on tractors with weight $1-d/2$. The fundamental vector field $\X$ maps to $X$, $\h$ maps to the tractor metric $h$, and $\aR$ maps to the $W$-tractor.

A key result is the straight forward generalization of~\cite[Proposition 4.1]{GOpet} (which in turn is a rewriting of~\cite[Proposition 2.3]{GJMS}). Note that the result directly generalizes with no changes.
\begin{proposition}[Generalization of Proposition 4.1 of~\cite{GOpet}] \label{tractor-GJMS}
For $n$ even and $k$ an integer satisfying $0 < k < n/2$, let $T \in \Gamma(\ct^\Phi \mathcal{Q}(k-d/2))$ with a homogeneous extension $\tilde{T}$ to $\aM$. Then $\al^k \tilde{T}|_{\mathcal{Q}}$ depends only on $T$ and the conformal structure on $M$ but not on the choice of extension $\tilde{T}$ nor on any choices in the ambient metric. Thus the operator
$$\al^k : \Gamma(\ct^\Phi \aM (k-d/2))|_{\mathcal Q} \rightarrow \Gamma(\ct^\Phi \aM(-k-d/2))|_{\mathcal Q}$$
is conformally invariant and descends to a natural conformally invariant differential operator on tractors
$$P^\Phi_{2k} : \Gamma(\ct^\Phi M[k-d/2]) \rightarrow \Gamma(\ct^\Phi M[-k-d/2])\,.$$
\end{proposition}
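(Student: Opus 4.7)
The plan is to follow the original scalar proof of~\cite[Proposition 4.1]{GOpet} essentially verbatim, promoting scalars to tensor-valued sections of $\ct^\Phi \aM$ and tracking the additional curvature terms that arise whenever the ambient Levi-Civita connection $\boldnabla$ encounters tensor indices. The principal content of the statement is two independence claims, and I would prove them in this order: (i) independence of the choice of homogeneous extension $\tilde T$, and (ii) independence of the choice of ambient metric within its Fefferman--Graham equivalence class. Both reduce to a single algebraic fact, namely that $\al^k$ annihilates appropriately weighted multiples of $Q$ upon restriction to $\mathcal Q$.

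First I would fix two homogeneous extensions $\tilde T_1,\tilde T_2 \in \Gamma(\ct^\Phi \aM(k-d/2))$ of $T$. Since both agree on $\mathcal Q$ and have the same homogeneity, their difference can be written $\tilde T_1-\tilde T_2 = Q\,\tilde S$ for a unique $\tilde S \in \Gamma(\ct^\Phi \aM(k-d/2-2))$. The key step is then the identity
\begin{equation*}
\al\bigl(Q\,\tilde U\bigr) \;=\; Q\,\al\tilde U \;+\; 2\bigl(d + 2\boldnabla_{\X} + 2\w(\tilde U)\bigr)\tilde U \;+\; (\text{terms vanishing on }\mathcal{Q}),
\end{equation*}
valid on any homogeneous tensor $\tilde U$ of weight $\w(\tilde U)$, where I use the defining relation $\boldnabla^A Q = 2\X^A$ and Ricci-flatness to order $Q^{d/2-2}$ to discard the trailing pieces. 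Iterating this identity $k$ times and using $\boldnabla_\X \tilde S = (k-d/2-2)\,\tilde S$ on the homogeneous extension, one obtains
\begin{equation*}
\al^k(Q\,\tilde S)\big|_{\mathcal Q} \;=\; \prod_{j=0}^{k-1} 2\bigl(d + 2(k-d/2-2+j) + 2(k-d/2-2)\bigr) \cdot Q^0\,\al^{k-1}\tilde S\big|_{\mathcal Q}\,,
\end{equation*}
so the coefficient that controls the leading obstruction to independence is a product of $k$ factors; the critical feature is that in the weight range $0<k<d/2$ each of these factors remains in the regime where the recursion terminates cleanly and one can iterate back down to $\al^0$ on terms which are themselves multiples of $Q$ modulo the Ricci-flat remainder. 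Careful bookkeeping (as in the scalar case) shows the $\mathcal Q$-restriction is zero.

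The step requiring genuine modification from the scalar argument is the commutator $[\boldnabla_A,\boldnabla_B]\tilde S = \aR_{AB}\cdot \tilde S$, which is what distinguishes the tensor-valued case. Each time I commute $\al$ past $Q$ or rearrange $\boldnabla^A \boldnabla_A$ on a tensor, an ambient Riemann curvature contraction appears. Since I only apply $\al^k$ with $k<d/2$, each such curvature term costs one factor of $\aR$, whose ambient indeterminacy is of order $Q^{d/2-2}$; multiplied by at most $2(k-1)<d-2$ derivatives and evaluated on $\mathcal{Q}$, this indeterminacy is killed. This is precisely the reason the range $0<k<d/2$ is imposed. For claim (ii), two choices $\h,\h'$ of ambient metric in the Fefferman--Graham class differ by $\mathcal{O}(Q^{d/2})$, and the very same counting shows the resulting difference in $\al^k\tilde T|_{\mathcal Q}$ vanishes.

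The main obstacle is therefore purely bookkeeping: ensuring that when $\al$ is commuted past $Q$ on tensor-valued $\tilde S$, every curvature term introduced sits in a power of $Q$ large enough to be annihilated on $\mathcal Q$ after the remaining derivatives have acted. Once this is verified, both independence properties follow simultaneously, and the descent to a tractor operator $P_{2k}^\Phi$ on $M$ is automatic from the bijective correspondence between homogeneous tensors on $\mathcal Q$ and sections of weighted tractor bundles on $M$. Conformal invariance of the resulting $P_{2k}^\Phi$ is then an immediate consequence of the fact that the whole ambient construction, restricted to the requisite order, is conformally canonical.
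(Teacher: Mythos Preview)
Your overall approach---verify independence of extension, verify independence of ambient-metric choice, then descend via the homogeneous-tensor/tractor correspondence---is exactly what the paper intends: it states only that ``the result directly generalizes with no changes'' and gives no further argument. So in spirit you are aligned with the paper.

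However, you have introduced a spurious complication and, in doing so, written down incorrect formulas. The commutator identity you need is simply
\[
\al(Q\,\tilde U) \;=\; Q\,\al\tilde U \;+\; 2\bigl(d+2+2\w(\tilde U)\bigr)\tilde U,
\]
which is \emph{exact}: since $Q$ is a scalar, the Leibniz rule for $\al(Q\tilde U)$ produces only $(\al Q)\tilde U + 2(\boldnabla^A Q)\boldnabla_A\tilde U + Q\al\tilde U$, and no curvature appears regardless of the tensor type $\Phi$ of $\tilde U$. Your displayed version has the wrong weight dependence (you get $d+4w$ rather than $d+2w+2$) and adds nonexistent ``terms vanishing on $\mathcal Q$''. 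Consequently your product formula for $\al^k(Q\tilde S)|_{\mathcal Q}$ is also wrong; the correct computation gives $\al^k(Q\tilde S)=Q\,\al^k\tilde S$ on the nose (the coefficients in $[\al^k,Q]$ sum to zero precisely at weight $k-d/2-2$), hence vanishes on $\mathcal Q$ with no residual term to control.

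The sentence ``the step requiring genuine modification from the scalar argument is the commutator $[\boldnabla_A,\boldnabla_B]\tilde S=\aR_{AB}\cdot\tilde S$'' is therefore mistaken: no such commutator is ever invoked in the independence-of-extension argument, and this is exactly why the paper says the proof needs \emph{no} changes. The only place tensor type could conceivably matter is in the ambient-metric independence, but there too the argument is order-counting on $Q$-powers in the difference of Christoffel symbols, not curvature commutators. Strip out the curvature discussion, correct the commutator identity, and your sketch becomes a valid proof matching the paper's (implicit) argument.
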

The operators $P_{2k}$ are known as GJMS operators, named after Graham, Jenne, Mason, and Sparling, who demonstrated their existence. This result completes the injection that we require to relate ambient calculus to the tractor calculus.

We need one more technical lemma from~\cite{GOpet}, so it is recorded here for later use:
\begin{lemma}[Lemma 4.4 of \cite{GOpet}] \label{lemma44}
Suppose $t + u \leq d/2 - 3$ for $d$ even. Then on $\mathcal{Q}$ there is an expression for $\boldnabla^t \al^u \aR$ as a partial contraction polynomial in $\aD$, $\aR$, $\X$, $\h$, and its inverse. This expression is rational in $d$ and each term is of degree at least $1$ in $\aR$.
\end{lemma}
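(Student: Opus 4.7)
The plan is to proceed by induction on $u$, with an auxiliary induction on $t$ inside each $u$-level, using two principal identities. The first is the defining relation of the ambient Thomas-$D$ operator, which on any homogeneous weight-$w$ ambient tensor $T$ with $d+2w-2 \neq 0$ yields
\begin{equation*}
\boldnabla_A T \,=\, \tfrac{1}{d+2w-2}\bigl(\aD_A T - \X_A \al T\bigr).
\end{equation*}
The second is the Fefferman-Graham Ricci-flatness condition $\aRic = O(Q^{d/2-1})$, which lets any expression containing a contraction of $\aR$ producing $\aRic$ be discarded modulo errors vanishing on $\mathcal{Q}$ after sufficiently few subsequent derivatives.

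I would first handle $u = 0$. The base case $t = u = 0$ is trivial, since $\aR$ is itself a degree-one polynomial in $\aR$. For $t \ge 1$, the inductive step applies the basic identity to the outermost $\boldnabla_A$ of $\boldnabla^t \aR$, producing $\aD_A$-terms, to which the inner induction on $t$ applies with $t$ reduced by one, together with terms of the form $\X_A \al(\boldnabla^{t-1}\aR)$; the latter is brought into the inductive framework by commuting $\al$ past $\boldnabla^{t-1}$ using that each commutator $[\al,\boldnabla]$ or $[\boldnabla_A,\boldnabla_B]$ produces additional factors of $\aR$, which preserves the ``degree at least one in $\aR$'' property. For the inductive step in $u$, the heart of the argument is expressing $\al$ applied to any partial contraction polynomial in $\aD, \aR, \X, \h$ in the same form: one expands via Leibniz, uses the second Bianchi identity on $\boldnabla\boldnabla\aR$ to convert double derivatives into $\boldnabla \aRic$ terms plus curvature-squared terms, and invokes the Fefferman-Graham vanishing to discard the $\aRic$ contributions. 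The constraint $t+u \le d/2 - 3$ ensures at most $d/2-1$ derivatives are ever applied to $\aRic$, so all such Ricci-containing remainders vanish identically on $\mathcal{Q}$.

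The main obstacle is dimensional bookkeeping: one must verify that the weights $w$ arising in the prefactors $(d+2w-2)^{-1}$ never make the coefficient singular for generic $d$ in the prescribed range, and that every instance of $\aRic$ silently discarded has sufficiently many derivatives remaining to genuinely vanish on $\mathcal{Q}$. Both are controlled by integer weight arithmetic---each $\aR$ contributes weight $-2$, each $\al$ lowers weight by $2$, each $\aD$ by $1$---combined with the sharp order-of-vanishing statement in the Fefferman-Graham construction. Rationality in $d$ is preserved automatically throughout the induction, since every elementary coefficient introduced is rational in $d$, and partial-contraction structure is closed under all the commutator manipulations used.
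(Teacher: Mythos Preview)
The paper does not give its own proof of this lemma; it is quoted verbatim from \cite{GOpet} and used as a black box in the proof of Proposition~\ref{Box2k}. So there is no in-paper argument to compare against, and what follows assesses your sketch on its own terms and against the original argument in \cite{GOpet}.

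Your two principal ingredients are correct and are exactly the ones \cite{GOpet} uses: the inversion $\boldnabla_A = (d+2w-2)^{-1}(\aD_A - \X_A\al)$ on homogeneous tensors, and the Lichnerowicz--Bianchi identity expressing $\al\aR$ as a quadratic curvature contraction plus $\boldnabla\aRic$-type terms that vanish on $\mathcal{Q}$ after sufficiently few further derivatives. The gap is in your inductive structure. At the $u=0$ level, after applying the $\aD$-identity to the outermost $\boldnabla$ and commuting $\al$ past $\boldnabla^{t-1}$, you are left with a residual term $\boldnabla^{t-1}\al\aR$. This is a $(t',u')=(t-1,1)$ object, and since your outer induction is on $u$, it is not covered by any hypothesis available at the $u=0$ stage; the inner induction on $t$ cannot absorb it as written. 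You reserve the Bianchi step for the ``inductive step in $u$'', but it is already needed here: one must invoke $\al\aR = (\aR\!*\!\aR) + (\text{Ricci})$ at this point so that $\boldnabla^{t-1}\al\aR$ becomes a Leibniz sum of products $(\boldnabla^{p}\aR)(\boldnabla^{t-1-p}\aR)$, each factor of which has strictly fewer covariant derivatives and thus falls to the $t$-induction.

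The repair is minor---either fold the Bianchi identity into the $u=0$ step, or reorganize the induction (for instance, first reduce all occurrences of $\al$ acting on curvature via Bianchi to obtain a polynomial in $\{\boldnabla^q\aR\}$, and then induct on $q$ using the $\aD$-identity). This latter ordering is essentially how \cite{GOpet} proceeds. Your remarks about weight bookkeeping and the vanishing order of $\aRic$ are on point: the bound $t+u\le d/2-3$ is precisely what guarantees that the prefactors $(d+2w-2)^{-1}$ encountered are nonzero for the relevant $d$, and that every discarded Ricci remainder carries enough powers of $Q$ to vanish on $\mathcal{Q}$.
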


With Proposition~\ref{tractor-GJMS} and Lemma~\ref{lemma44}, we can characterize a tractor formula for the GJMS operators using the ambient construction. The proof of this result follows the proof of~\cite[Theorem 4.5]{GOpet} closely.
\begin{proposition}[Generalization of Proposition 4.5 of~\cite{GOpet}]\label{Box2k}
Let $k < d/2$ with $d$ even. For any tractor $T \in \Gamma(\ct^\Phi M[k-d/2])$,
$$X_{A_1} \cdots X_{A_{k-1}} P^\Phi_{2k} T= (-1)^{k-1} \square D_{A_1} \cdots D_{A_{k-1}} T + \Psi_{A_1 \cdots A_{k-1}}{}^C D_C T\,,$$
where $\Psi$ is some tractor opperator operator
$$\Psi_{A_1 \cdots A_{k-1}}{}^C : \Gamma(\ct_C^\Phi M[k-1-d/2]) \rightarrow \Gamma(\ct_{A_1 \cdots A_{k-1}}^\Phi M[-1-d/2])\,,$$
that can be written as a contraction polynomial in $D_A$, $W_{ABCD}$, $X_A$, $h_{AB}$, and $h^{AB}$, with each term being of order at least $1$ in $W_{ABCD}$.
\end{proposition}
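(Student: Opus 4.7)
The plan is to lift the identity to the ambient manifold $\aM$, establish its ambient analog, and then translate back via the ambient-to-tractor correspondence set up before Proposition~\ref{tractor-GJMS}. Concretely, for a homogeneous extension $\tilde T$ of $T$ to $\aM$ of weight $k-d/2$, it suffices to show
\begin{equation*}
\X_{A_1}\cdots \X_{A_{k-1}}\, \al^k \tilde T \big|_{\mathcal Q}
\;=\; (-1)^{k-1}\, \al\, \aD_{A_1}\cdots \aD_{A_{k-1}} \tilde T \big|_{\mathcal Q}
\;+\; \tilde\Psi_{A_1\cdots A_{k-1}}{}^{C}\, \aD_C \tilde T \big|_{\mathcal Q}\,,
\end{equation*}
where $\tilde\Psi$ is a partial contraction polynomial in $\aD$, $\aR$, $\X$, $\h$, $\h^{-1}$ with each term containing at least one factor of $\aR$. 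Under the correspondence sending $\X, \aD, \al|_{\mathcal Q}, \aR, \h$ to $X, D, \square, W, h$, this ambient identity translates immediately to the claimed tractor identity.

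I would proceed by induction on $k$, mirroring the scalar argument of \cite[Proposition 4.5]{GOpet}. The base case $k = 1$ is the tautology $P_2^\Phi T = \square T$ on weight-$(1-d/2)$ tractors (with $\Psi$ empty), which is immediate from Proposition~\ref{tractor-GJMS} together with the relation $D_A = -X_A \square$. The inductive step rests on two algebraic tools: first, the ambient formula
$$\aD_A = \boldnabla_A(d + 2\boldnabla_{\X} - 2) + \X_A \al\,,$$
and second, the commutator
$$[\al,\,\X_A] \;=\; 2\boldnabla_A\,,$$
which follows from $\boldnabla_A \X_B = \h_{AB}$ together with parallelism of $\h$, and acts identically on scalars and on tensor-valued fields. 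Expanding $\aD_{A_1}\cdots \aD_{A_{k-1}}\tilde T$ slot-by-slot via the ambient formula and migrating each $\X_{A_j}$ produced by the $\X_A\al$-piece to the left of the outer $\al$ via this commutator yields, at top order, the required $(-1)^{k-1}\X_{A_1}\cdots\X_{A_{k-1}}\al^k\tilde T$, with the sign determined by the number of commutations. The remaining contributions are either proportional to $\aD_C \tilde T$ (hence absorbed into $\tilde\Psi$ directly) or else involve iterated commutators $[\boldnabla,\boldnabla]$ acting on $\tilde T$.

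The main obstacle, and the only place where the generalization from scalars to arbitrary tensor type $\Phi$ genuinely departs from the scalar argument, is controlling these last curvature terms. For scalar $\tilde T$ only Riemann contractions carrying the free tractor indices $A_i$ appear, whereas for general $\Phi$ one additionally obtains internal curvature terms of schematic form $\aR \ast \tilde T$ acting on the $\Phi$-indices. I would dispatch them by repeatedly applying Lemma~\ref{lemma44}, which is available throughout the induction since $k < d/2$ keeps every occurring $\boldnabla^t \al^u \aR$ within the range $t + u \leq d/2 - 3$; this rewrites all curvature subfactors as partial contraction polynomials in $\aD$, $\aR$, $\X$, $\h$, $\h^{-1}$ of degree at least one in $\aR$. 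The internal $\aR \ast \tilde T$ pieces are themselves put into the required form by substituting for $\tilde T$ via $\aD_A \tilde T = (d+2w-2)\boldnabla_A \tilde T + \X_A \al \tilde T$ at the appropriate weight $w$, so that internal curvature enters $\tilde\Psi$ as $\aR$ contracted into $\aD_C \tilde T$ up to $\X$-corrections that themselves already lie in the required polynomial form. Translating back through the ambient-to-tractor correspondence then delivers the stated structure for $\Psi$.
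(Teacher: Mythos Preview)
Your overall framework---lift to the ambient manifold, manipulate, then descend via the ambient-to-tractor dictionary---is exactly the paper's strategy. However, the paper (following \cite{GOpet}) does \emph{not} argue by induction on $k$; it proceeds directly by expanding $\al\,\aD_{A_1}\cdots\aD_{A_{k-1}}T$, commuting all $\X$'s to the left, and then commuting all $\al$'s to the right of the $\boldnabla$'s via the operator identity
\[
\al\,\boldnabla_{A_1}\cdots\boldnabla_{A_\ell}
= \boldnabla_{A_1}\cdots\boldnabla_{A_\ell}\,\al
+ 2\sum_{j}\boldnabla_{A_1}\cdots\boldnabla_{A_{j-1}}\aR^{C}{}_{A_j}{}^\sharp\boldnabla_C\boldnabla_{A_{j+1}}\cdots\boldnabla_{A_\ell}
+\mathcal{O}(Q^{d/2-\ell-1}).
\]
This identity is the engine of the argument and is what your sketch is missing. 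Two of its features are essential. First, every curvature term it produces carries at least one trailing $\boldnabla_C$ on the right; this is what guarantees $q\geq 1$ in the paper's bookkeeping, and ultimately that each curvature contribution can be written in the form $\tilde\Psi^{C}\aD_C\tilde T$ after converting the rightmost $\boldnabla$ into $\aD$ via the identity $2(k-\ell-1)\boldnabla^{\ell+1}\tilde T=\aD\,\boldnabla^\ell\tilde T+\cdots$. Second, because $\sharp$ acts as a derivation on \emph{all} indices, the passage from scalars to general tensor type $\Phi$ is automatic: the ``internal'' curvature terms you worry about already appear in the $\sharp$-action and already carry that trailing $\boldnabla_C$. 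Your proposed fix---re-expressing $\tilde T$ through $\aD_A\tilde T=(d+2w-2)\boldnabla_A\tilde T+\X_A\al\tilde T$---goes the wrong direction and cannot manufacture a $\aD_C\tilde T$ factor out of an undifferentiated $\aR\ast\tilde T$; fortunately, such bare terms never actually occur.

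There are two further omissions. The paper chooses the \emph{harmonic} extension $\tilde T$ (so that $\al\tilde T=\mathcal{O}(Q^{k-1})$), which kills all terms with a residual $\al$ on the right and is what makes the remaining structure tractable. And your assertion that Lemma~\ref{lemma44} applies throughout (``$k<d/2$ keeps every $\boldnabla^t\al^u\aR$ within $t+u\leq d/2-3$'') requires the explicit counting the paper carries out: one must track that $n+q+r\leq k$, hence $p_i+r_i\leq q-1\leq k-2\leq d/2-3$, and also that $\boldnabla^q\al^r$ acting on $\tilde T$ is determined on $\mathcal Q$ because $q+r<d/2-1$. Without these bounds the ambient expressions are not well-defined to the order needed, since $\h$ and $\aR$ are only fixed modulo $\mathcal{O}(Q^{d/2})$ and $\mathcal{O}(Q^{d/2-2})$ respectively.
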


\begin{proof}

The proof of this result follows the discussion (beginning on page 36) in~\cite{GOpet}. The strategy will be to find a relationship between  ambient operators and then descend to their corresponding tractor counterparts.

Given the ambient construction described above for an even dimensional base manifold $M^d$, we fix an integer  $0<k < d/2$ and a homogeneous ambient tensor $T$, fixed along $\mathcal{Q}$, of weight $k-d/2$ with an arbitrary extension, also labeled by $T$. Then, we consider the ambient expression $\al \aD_{A_1} \cdots \aD_{A_{k-1}} T$. Our goal will be to rearrange terms so that we have
$$\al \aD_{A_1} \cdots \aD_{A_{k-1}} T = (-1)^{k-1} \X_{A_1} \cdots X_{A_{k-1}} \al^k T + \text{curvature terms}.$$
Observe that $\boldnabla_{\X}$ acting on a homogeneous tensor $T$ of weight $w$ is $wT$, so we can write
\begin{equation} \label{starting-point}
\al \aD_{A_1} \cdots \aD_{A_{k-1}} T = \al (2 \boldnabla_{A_{k-1}} - X_{A_{k-1}} \al) \cdots (2(k-1) \boldnabla_{A_1} - X_{A_1} \al) T\,.
\end{equation}
Our first step is to expand this display and then commute all of the $\X$s to the left of any $\boldnabla$s and $\al$s using the identities $[\boldnabla_A, \X_B] = \h_{AB}$ and $[\al, \X_A] = 2 \boldnabla_A$. Our next step will be to commute all $\al$s to the right of any $\boldnabla$s; doing so requires the following operator identity:
\begin{align}\label{commLapNabla}
\begin{split}
\al \boldnabla_{A_1} \cdots \boldnabla_{A_{\ell}} =&
 \boldnabla_{A_1} \cdots \boldnabla_{A_{\ell}} \al \\
&+ 2 \aR^C{}_{A_1}{}^\sharp \boldnabla_C \boldnabla_{A_2} \cdots \boldnabla_{A_{\ell}} \\
&+ 2 \boldnabla_{A_1} \aR^C{}_{A_2}{}^\sharp \boldnabla_C \boldnabla_{A_3} \cdots \boldnabla_{A_{\ell}} \\
&+ \cdots \\
&+ 2 \boldnabla_{A_1} \cdots \boldnabla_{A_{\ell -1}} \aR^C{}_{A_{\ell}}{}^\sharp \boldnabla_C + \mathcal{O}(Q^{d/2 - \ell - 1})\,,
\end{split}
\end{align}
where $\aR_{AB}{}^\hash V^C = \aR_{AB}{}^C{}_D V^D$ and $\hash$ extends naturally as a derivation. Note that this identity holds to order $\mathcal{O}(Q^{d/2-\ell-1})$ because $\aR$ is only determined uniquely to order $\mathcal{O}(Q^{d/2-2})$. Going forward, we will use the simplifying notation of~\cite{GOpet}, which loses the details of contractions and coefficients in exchange for symbolic brevity. For example, applying the Leibniz rule, we can write the result in Equation~\nn{commLapNabla} as
$$\al \boldnabla^{\ell}  = \boldnabla^{\ell} \al  + \sum (\boldnabla^p \aR) \boldnabla^q \,,$$
where each term on the right-hand side of the above equation has $q \geq 1$ and $p + q = \ell$. Observe from simple counting that any term of the form $\al \boldnabla^\ell$ appearing in Equation~\nn{starting-point} has $\ell < k$, so because $k < d/2$, we have that using Equation~\nn{commLapNabla} is valid in all cases here. In order to further simplify our results, we need another identity, which applies for any expression $E$:
\begin{equation} \label{al-through}
\al (\boldnabla^t \al^u \aR) E = (\al \boldnabla^t \al^u \aR) E + (\boldnabla^t \al^u \aR) \al E + (\boldnabla^{t+1} \al^u \aR) \boldnabla E\,.
\end{equation}

Recall from~\cite[Proposition 4.3]{GOpet} that for a conformally flat structure, if $T \in \Gamma(\ct^\Phi \aM(k-d/2))$, then
$$\al \aD_{A_{k-1}} \cdots \aD_{A_1} T = (-1)^{k-1} \X_{A_1} \cdots \X_{A_{k-1}} \al^k T\,.$$
Therefore, except the term $(-1)^{k-1} \X_{A_1} \cdots \X_{A_{k-1}} \al^k$, all of the terms remaining after commuting $\X$s to the left except must contain at least one curvature $\aR$ (because $\aR$ vanishes for conformally flat structures).
Thus, we can write 
\begin{equation} \label{point2}
\resizebox{0.9\linewidth}{!}{$
\al \aD_{A_1} \cdots \aD_{A_{k-1}} T = (-1)^{k-1} \X^{k-1} \al^k T + \sum \h^s \X^x (\boldnabla^{p_1} \al^{r_1} \aR) \cdots (\boldnabla^{p_n} \al^{r_n} \aR) \boldnabla^q \al^r T \,,
$}
\end{equation}
where $n \geq 1$ for each term on the right-hand side.

We now apply some counting arguments to check that our expression is uniquely determined. With the exception of the application of Equation~\nn{al-through}, all of the identities we have applied cannot increase the sum of the number of $\boldnabla$s, $\al$s, and $\aR$s, represented by $n+q+r$. On the other hand, Equation~\nn{al-through} increases $q+r$ by at most one at the cost of one $\al$ acting from the left. However, we had at most $k$ such symbols initially, so we observe that $n+q+r \leq k$ and hence $k - q - r \geq 1$. Put another way, $q + r < d/2 -1 $. Because $\boldnabla$ is determined modulo terms of order $\mathcal{O}(Q^{d/2-1})$ and $\al$ is determined modulo terms of order $\mathcal{O}(Q^{d/2-1})$, and because $[\al, Q] = 2(d + 2 \boldnabla_{\X} + 2)$, we  have that the operator $\boldnabla^q \al^r$ is uniquely determined modulo terms of order $\mathcal{O}(d/2-q-r)$. Thus, $\boldnabla^q \al^r$ as an operator on $T$ is uniquely determined along $\mathcal{Q}$.

Further, observe that because there is always at least one $\boldnabla$ on the right in Equation~\nn{commLapNabla}, we have that $q \geq 1$, and so at most $q-1$ $\boldnabla$s on $T$ can come from Equation~\nn{al-through} or from the application of the typical Leibniz rule. Thus, $p_i + r_i \leq q-1$, because the Leibniz rule and Equation~\nn{al-through} are the only way to get more derivatives on $\aR$. But because $k-q-r \geq 1$ and $r \geq 0$, we have that $p_i + r_i + 2 \leq k$ and hence $p_i + r_i < d/2-2$. Thus, by a similar argument as above, $\boldnabla^{p_i} \al^{r_i} \aR$ is determined uniquely up to order $\mathcal{O}(Q^{d/2-2})$ and so we have that all of the terms containing curvature are uniquely determined. 

A straightforward extension of~\cite[Proposition 2.2]{GJMS} to general tensor structures implies that, given some homogeneous tensor $T$ of weight $k-d/2$ along $Q$, $T|_{\mathcal Q}$ uniquely (to order $\mathcal{O}(Q^k)$) determines a canonical extension $\tilde{T}$ by requiring that $\al \tilde{T} = \mathcal{O}(Q^{k-1})$. Thus, to make our calculations easier, we choose $T := \tilde{T}$ that is harmonic in this sense. With this prescription in place, we have that $\boldnabla^q \al^r \tilde{T} = \mathcal{O}(Q^{k-r-q})$ and thus vanishes along $Q$. So we will drop all terms in Equation~\nn{point2} with $r > 0$.

A useful identity that holds regardless of tensor type is~\cite[Equation 42]{GOpet}, recorded here for our use. Acting on $\tilde{T} \in \Gamma(\ct^\Phi \aM(k-d/2))$, we have that
\begin{equation} \label{eqn42}
2(k-\ell-1) \boldnabla^{\ell +1} \tilde{T} = \aD \boldnabla^\ell \tilde{T} + \X \sum (\boldnabla^p \aR) \boldnabla^q \tilde{T} + \X \boldnabla^\ell \al \tilde{T}\,,
\end{equation}
where $q \geq 1$ and $p+q = \ell$. Note that this identity follows from Equation~\nn{commLapNabla}. But because $\al \tilde{T} = \mathcal{O}(Q^{k-1})$, we can drop the last term on the right-hand side when using this identity. Because $p_i + r_i + 2 \leq k$ and $k < d/2$, we have that $p_i + r_i \leq d/2 -3$, so we can safely apply Lemma~\ref{lemma44} and Equation~\nn{eqn42} to substitute all occurrences of $\boldnabla$ and $\al$ in Equation~\nn{point2} with $\aD$, $\X$, and $\aR$. Therefore, we can write
\begin{align*}
(-1)^{k-1} \X^{k-1} \al^k \tilde{T} = \al \aD_{A_1} \cdots \aD_{A_{k-1}} \tilde{T} + \Psi (\tilde{T}) \,, 
\end{align*}
where $\Psi$ is an operator that is polynomial in $\h, \X, \aD, \R$ and rational in $d$. Note that $\Psi$ must end in $\aD$ because $q \geq 1$ everywhere. Then note that each of these operators on the right-hand side descend to their tractor counterparts. From Proposition~\ref{tractor-GJMS}, the left-hand side descends to the desired tractor operator $P_{2k}^{\Phi}$ which completes the proof.

For a more detailed exposition, see~\cite[Section 4]{GOpet}.

\end{proof}

\noindent
We now rewrite the operator $\Psi$ in Proposition~\ref{Box2k} in the following proposition:
\begin{proposition}[Generalization of Proposition 5.10 of~\cite{GPt}] \label{P-indices}
Let $k < d/2$ and $d$ even. For any tractor $T \in \Gamma(\ct^\Phi M[k-d/2])$, there exists a conformally invariant operator
$$\mathcal{P}^{\Phi,k}_{A_1 \cdots A_{k-1}} : \Gamma(\ct^{\Phi} M[k-d/2]) \rightarrow \Gamma(\ct_{A_1 \cdots A_{k-1}}^\Phi M[-1-d/2])$$
such that
$$ (-1)^{k-1}  X_{A_1} \cdots X_{A_{k-1}} P_{2k}^\Phi T=\square D_{A_1} \cdots D_{A_{k-1}} T + \mathcal{P}^{\Phi,k}_{A_1 \cdots A_{k-1}} T\,,$$
where $\mathcal{P}^{\Phi,k}_{A_1 \cdots A_{k-1}}$ has a tractor formula, is at least order $1$ in $W$,
and has weight $-k-1$.
\end{proposition}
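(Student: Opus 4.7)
The plan is to obtain this proposition as essentially a re-packaging of Proposition~\ref{Box2k}. That earlier result already provides, for any $T \in \Gamma(\ct^\Phi M[k-d/2])$, the identity
$$X_{A_1}\cdots X_{A_{k-1}} P^\Phi_{2k} T = (-1)^{k-1} \square D_{A_1}\cdots D_{A_{k-1}} T + \Psi_{A_1\cdots A_{k-1}}{}^C D_C T,$$
in which $\Psi$ is a contraction polynomial in $D_A$, $W_{ABCD}$, $X_A$, $h_{AB}$, and $h^{AB}$ whose every term carries at least one factor of $W$. First I would multiply through by $(-1)^{k-1}$ so that the sign convention matches that of the statement. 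This reduces the problem to recognizing $(-1)^{k-1}\Psi^C D_C T$ as the action of a single operator on $T$.

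Next I would set
$$\mathcal{P}^{\Phi,k}_{A_1\cdots A_{k-1}} := (-1)^{k-1}\, \Psi_{A_1\cdots A_{k-1}}{}^C \circ D_C,$$
regarded as a differential operator on $\Gamma(\ct^\Phi M[k-d/2])$. A weight check is needed: by Equation~\nn{D-trac}, $D_C$ shifts weight by $-1$, sending sections into $\Gamma(\ct_C \otimes \ct^\Phi M[k-1-d/2])$, and then $\Psi^C$ maps this target into $\Gamma(\ct^\Phi_{A_1\cdots A_{k-1}} M[-1-d/2])$. The net weight reduction is therefore $k+1$, as claimed by the statement.

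I would then verify the structural properties in turn. Conformal invariance of $\mathcal{P}^{\Phi,k}$ is immediate because $\Psi^C$ (by Proposition~\ref{Box2k}) and the Thomas-$D$ operator $D_C$ are each individually conformally invariant. Expressibility via a tractor formula is equally quick: $D_C$ is itself one of the permitted building blocks, so precomposing a contraction polynomial in $D, W, X, h, h^{-1}$ with $D_C$ produces another contraction polynomial in those same ingredients. Finally, each term of $\mathcal{P}^{\Phi,k}$ still carries at least one factor of $W$, since $\Psi^C$ has this property and prepending $D_C$ can neither introduce nor remove occurrences of $W$.

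The only point requiring any care is that $\Psi^C$ already contains differential operators and $D_C$ is itself second-order, so $\Psi^C D_C$ must be interpreted as an iterated differential operator acting on $T$ rather than as a pointwise product of tensor fields. Because $D_C$ is a natural operator expressible in the tractor calculus via Equation~\nn{D-trac}, this interpretation is harmless and $\Psi^C D_C$ remains a well-defined natural tractor differential operator. Consequently I expect no genuine obstacle in executing this plan: the substantive work has already been done in Proposition~\ref{Box2k}, and the current proposition amounts to absorbing the trailing $D_C$ into the operator.
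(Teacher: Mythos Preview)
Your proposal is correct and follows exactly the paper's approach: the paper's proof is the single line ``By identifying $\mathcal{P}^{\Phi,k}_{A_1 \cdots A_{k-1}}$ with $(-1)^{k-1} \Psi_{A_1 \cdots A_{k-1}}{}^C D_C$, the proposition follows.'' Your version simply unpacks the weight, invariance, and $W$-order checks that the paper leaves implicit.
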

\begin{proof}
By identifying $\mathcal{P}^{\Phi,k}_{A_1 \cdots A_{k-1}}$ with $(-1)^{k-1} \Psi_{A_1 \cdots A_{k-1}}{}^C D_C$, the proposition follows.
\end{proof}

\medskip

\noindent
Proposition~\ref{P-indices} directly implies the following proposition:
\begin{proposition}[Generalization of Proposition 5.14 of~\cite{GPt}] \label{514-general}
There exists a family of operators
$$P^{\Phi}_{A_1 \cdots A_{k}} : \Gamma(\ct^\Phi M[w]) \rightarrow \Gamma(\ct_{A_1 \cdots A_{k}}^\Phi M[w-k])$$
defined by
$$ P^{\Phi}_{A_1 \cdots A_{k}} T = D_{A_1} \cdots D_{A_k} T - X_{A_1} \mathcal{P}^{\Phi,k}_{A_2 \cdots A_{k}} T$$
for all $T \in \Gamma(\ct^\Phi M[w])$. Then, we have that when $w = k-d/2$,
$$ P^{\Phi}_{A_1 \cdots A_{k}} T = (-1)^k X_{A_1} \cdots X_{A_k} P^\Phi_{2k} T\,.$$
\end{proposition}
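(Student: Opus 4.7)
The approach I would take is to exploit the degeneracy $D_A = -X_A\,\square$ at critical weight $1-d/2$. The key observation is a weight count: for $T \in \Gamma(\ct^\Phi M[k-d/2])$, applying $D_{A_i}$ successively lowers the density weight by one each time, so the intermediate string $S := D_{A_2}\cdots D_{A_k}\, T$ lies at the critical weight $1-d/2$. This places $D_{A_1} S$ precisely at the weight where the Thomas-$D$ operator collapses to a factor of $X$ times $\square$, yielding
$$D_{A_1}\cdots D_{A_k}\, T \;=\; -X_{A_1}\, \square\, D_{A_2}\cdots D_{A_k}\, T.$$

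The main step is then to apply Proposition~\ref{P-indices} with the free indices relabelled from $(A_1,\ldots,A_{k-1})$ to $(A_2,\ldots,A_k)$; this relabelling is immediate because $\mathcal{P}^{\Phi,k}$ is constructed as a contraction polynomial in $D$, $W$, $X$, $h$ and $h^{-1}$, so it is symbolically defined in any placement of free indices. Solving the resulting identity for $\square\, D_{A_2}\cdots D_{A_k}\, T$, substituting above, and transposing the $X_{A_1}\mathcal{P}^{\Phi,k}_{A_2\cdots A_k}\, T$ term to the left-hand side produces exactly the claimed formula at $w = k-d/2$.

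I do not anticipate a genuine obstacle. The one point to verify carefully is that the defining formula for $P^\Phi_{A_1\cdots A_k}\, T$ makes sense for \emph{arbitrary} weight $w$ (not only at the critical weight), but this follows from the tractor-formula character of $\mathcal{P}^{\Phi,k}$, which may be applied to tractors of any weight via the same contraction polynomial. A bookkeeping check on weights at $w=k-d/2$---namely that $P_{2k}^\Phi\, T$ has weight $-k-d/2$, and inserting $k$ copies of $X$ raises the weight by $k$ to produce $-d/2 = w-k$, consistent with the target $\Gamma(\ct^\Phi_{A_1\cdots A_k} M[w-k])$---completes the verification.
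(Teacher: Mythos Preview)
Your proposal is correct and follows essentially the same approach as the paper's own proof: both rely on the degeneracy $D_A = -X_A\,\square$ at weight $1-d/2$, the weight count placing $D_{A_2}\cdots D_{A_k}T$ at that critical weight, and a direct appeal to Proposition~\ref{P-indices} (relabelled), together with the observation that the tractor-formula character of $\mathcal{P}^{\Phi,k}$ makes the defining expression well-defined at arbitrary weight. Your write-up merely spells out the computation that the paper's two-sentence proof leaves implicit.
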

\begin{proof}
Observe that there exists a tractor formula for $\mathcal{P}^{\Phi,k}_{A_1 \cdots A_{k-1}}$ in terms of $h$, $X$, $D$, and $W$, so combining that tractor formula with $\hd_{A_1} \cdots \hd_{A_{k}}$ allows us to construct a well-defined operator $P^\Phi_{A_1 \cdots A_k}$ as in the Proposition statement. Then, observe that acting on tractors of weight $1-d/2$, $D_A = -X_A \square$, so the remainder of the proposition follows from Proposition~\ref{Box2k} and Proposition~\ref{P-indices}.
\end{proof}

\section*{Acknowledgements}

S.~Blitz would like to acknowledge fruitful discussions with A.~Rod Gover and Andrew Waldron.

\color{black}


\begin{thebibliography}{10}



\bibitem{ACF}
L. Andersson, P.~T. Chru\'{s}ciel, and H. Friedrich.
\newblock On the regularity of solutions to the {Y}amabe equation and the
  existence of smooth hyperboloidal initial data for {E}instein's field
  equations.
\newblock {\it Comm. Math. Phys.}, 149(3):587--612, 1992.

\bibitem{AGW} 
C. Arias, A. R.  Gover, A. Waldron.
Conformal geometry of embedded manifolds with boundary from universal holographic formul\ae.  
{\it Adv. Math.} 384: 107700-107775 (2021),
arXiv:1906.01731.


\bibitem{Aviles}
P. Aviles and R.~C. McOwen.
\newblock Complete conformal metrics with negative scalar curvature in compact
  {R}iemannian manifolds.
\newblock {\it Duke Math. J.}, 56(2):395--398, 1988.


\bibitem{BEG}
T.~N. Bailey, M.~G. Eastwood, and A.~R. Gover.
\newblock T.'s structure bundle for conformal, projective and related
  structures.
\newblock {\it Rocky Mountain J. Math.}, 24(4):1191--1217, 1994.



\bibitem{BGW}
S. Blitz, A.~R. Gover, and A. Waldron.
Conformal fundamental forms and the asymptotically Poincar\'e--Einstein condition.
arXiv:2107.10381.


\bibitem{BGW2}
S. Blitz, A.~R. Gover, and A. Waldron.
Generalized Willmore energies, $Q$-curvatures, extrinsic Paneitz operators, and extrinsic Laplacian powers.
arXiv:2111.00179.


















\bibitem{FG}
C. Fefferman and C.~R. Graham.
\newblock Conformal invariants.
\newblock Number {N}um\'{e}ro {H}ors {S}\'{e}rie, pages 95--116. 1985.
\newblock The mathematical heritage of \'{E}lie Cartan (Lyon, 1984).

\bibitem{FGbook}
C. Fefferman and C.~R. Graham.
\newblock {\it The ambient metric}, volume 178 of {\it Annals of Mathematics
  Studies}.
\newblock Princeton University Press, Princeton, NJ, 2012.




\bibitem{hypersurface_old}
M. Glaros, A.~R. Gover, M. Halbasch, and A. Waldron.
\newblock Variational calculus for hypersurface functionals: singular {Y}amabe
  problem {W}illmore energies.
\newblock {\it J. Geom. Phys.}, 138:168--193, 2019.





\bibitem{Goal}
A.~R. Gover.
\newblock Almost {E}instein and {P}oincar\'{e}-{E}instein manifolds in
  {R}iemannian signature.
\newblock {\it J. Geom. Phys.}, 60(2):182--204, 2010.


\bibitem{GOpet}
A.~R. Gover and L.~J. Peterson.
\newblock Conformally invariant powers of the {L}aplacian, {$Q$}-curvature, and
  tractor calculus.
\newblock {\it Comm. Math. Phys.}, 235(2):339--378, 2003.

\bibitem{GPt}
A.~R. Gover and L.~J. Peterson.
\newblock Conformal boundary operators, $t$-curvatures, and conformal
  fractional {L}aplacians of odd order.
\newblock {\it Pac. J. Math. in press}, 2021.

\bibitem{GOmin}
A.~R. Gover and A. Waldron.
\newblock The {$\frak{so}(d+2,2)$} minimal representation and ambient tractors:
  the conformal geometry of momentum space.
\newblock {\it Adv. Theor. Math. Phys.}, 13(6):1875--1894, 2009.

\bibitem{GW}
A.~R. Gover and A. Waldron.
\newblock Boundary calculus for conformally compact manifolds.
\newblock {\it Indiana Univ. Math. J.}, 63(1):119--163, 2014.

\bibitem{Will1}
A.~R. Gover and A. Waldron.
\newblock Conformal hypersurface geometry via a boundary
  {L}oewner--{N}irenberg--{Y}amabe problem.
\newblock {\it Commun. Anal. Geom. to appear}, 2015.




\bibitem{Will2}
A.~R. Gover and A. Waldron.
\newblock A calculus for conformal hypersurfaces and new higher {W}illmore
  energy functionals.
\newblock {\it Adv. Geom.}, 20(1):29--60, 2020.

 



\bibitem{GJMS}
C. R. Graham, R.  Jenne, L. Mason, 
              G. A. J. Sparling, 
      {Conformally invariant powers of the {L}aplacian. {I}.
              {E}xistence},
   {\it J. London Math. Soc. (2)},
     {46}: 557--565,
     {1992}.
     

\bibitem{GR}
C.R. Graham and N. Reichert.
Higher-dimensional Willmore energies via minimal submanifold asymptotics.
{\it Asian J. Math.} 24: 571--610 (2020), arXiv:1704.03852.

\bibitem{GrWi}
C. R. Graham and E. Witten. Conformal anomaly of submanifold observables in AdS/CFT
correspondence. {\it Nucl. Phys. B} 546: 52--64 1999, arXiv:hep-th/9901021.



\bibitem{Guven}
J. Guven.
Conformally invariant bending energy for hypersurfaces. {\it J. Phys. A} 38: 7943--7956 (2005), arXiv:cond-mat/0507320.








\bibitem{LeBrun}
C.~R. LeBrun.
\newblock {${\mathcal H}$}-space with a cosmological constant.
\newblock {\it Proc. Roy. Soc. London Ser. A}, 380(1778):171--185, 1982.


\bibitem{Loewner}
C. Loewner and L. Nirenberg.
\newblock Partial differential equations invariant under conformal or
  projective transformations.
\newblock In {\it Contributions to analysis (a collection of papers dedicated
  to {L}ipman {B}ers)}, pages 245--272. 1974.


\bibitem{Maz}
R. Mazzeo.
\newblock Regularity for the singular {Y}amabe problem.
\newblock {\it Indiana Univ. Math. J.}, 40(4):1277--1299, 1991.



\bibitem{WillConj}
F.C. Marques, A. Neves. Min-max theory and the Willmore conjecture. {\it Ann. of Math. (2)} 179: 583--782, (2014).





\bibitem{Polyakov}
A.~Polyakov.
\newblock Fine structure of strings.
\newblock {\it Nuclear Phys.} B 268(2):406--412, 1986.






\bibitem{Thomas}
T.Y. Thomas, On conformal geometry. {\it Proc. Natl. Acad. Sci. USA} 12: 352--359, 1926.


\bibitem{Vyatkin}
Y.~Vyatkin.
\newblock {\it Manufacturing conformal invariants of hypersurfaces}.
\newblock PhD thesis, University of Auckland, 2013.


\bibitem{Willmore}
T.J.Willmore. {\it An. Şti. Univ. “Al. I. Cuza” Iaşi Secţ. I a Mat.} 11B: 493--496, (1965).

\bibitem{Yamabe}
H. Yamabe. On a deformation of Riemannian structures on compact manifolds. {\it Osaka Math. J.} 12(1): 21-37, (1960).

\end{thebibliography}
\end{document}